\newtheorem{thm}{Theorem}[section]
\newtheorem{lem}[thm]{Lemma}
\newtheorem{prop}[thm]{Proposition}
\newtheorem{cor}[thm]{Corollary}
\newtheorem{rmk}[thm]{Remark}
\theoremstyle{definition}
\newtheorem{defn}{Definition}[section]
\theoremstyle{remark}
\makeatletter \@addtoreset{equation}{section}
\newcommand{\thmref}[1]{Theorem~\ref{#1}}
\def\a{\alpha}
\def\b{\beta}
\def\g{\gamma}
\def\s{\sigma}
\def\t{\tau}
\def\l{\lambda}
\def\p{\partial}
\def\vphi{\varphi}
\def\g{\mathfrak g}
\def\and{\quad{\rm and}\quad}
\let\lra=\longrightarrow
\def\mapright{\xrightarrow}
\def\mapright\#1{\,\smash{\mathop{\lra}\limits^{\#1}}\,}
\def\om{\omega}
\def\Om{\Omega}
\def\tri{\triangle}
\newcommand{\as}{\underline{S}}
\numberwithin{equation}{section}
\DeclareMathOperator{\trivial}{trivial}
\DeclareMathOperator{\dist}{dist}
\newcommand{\Rmnum}[1]{\expandafter\@slowromancap\romannumeral #1@}
   \def\sideremark#1{\ifvmode\leavevmode\fi\vadjust{
   \vbox to0pt{\hbox to 0pt{\hskip\hsize\hskip1em
   \vbox{\hsize3cm\tiny\raggedright\pretolerance10000
   \noindent #1\hfill}\hss}\vbox to8pt{\vfil}\vss}}}
\title[The Dirichlet and the weighted metrics]{ The Dirichlet and the weighted metrics for the space of K\"ahler metrics}
\author [Simone Calamai]{Simone Calamai}
  \address{Scuola Normale Superiore di Pisa - Piazza dei Cavalieri, 7 - 56126 Pisa, Italy }
  \email{simocala@gmail.com}
\author [Kai Zheng]{Kai Zheng}
  \address{Institut Fourier, Universit\'{e} Joseph Fourier (Grenoble I), UMR 5582 CNRS-UJF, BP 74, 38402 Saint-Martin-d'H\`{e}res, France}
  \email{kai.zheng@ujf-grenoble.fr}
\thanks{The first author is partially supported by INdAM grants. The second author is partially supported by ANR project "Flots et Op\'erateurs
G\'eom\'etriques" (ANR-07-BLAN-0251-01).}
\keywords{Mathematics Subject Classification (1991): 53C55 - 53C22}
\begin{document}
\maketitle
\begin{abstract}
In this work we study the intrinsic geometry of the space of K\"ahler metrics under various Riemannian metrics.
The first part is on the \emph{Dirichlet metric}.
We motivate its study, we compute its curvature, and we make links with the
Calabi metric, the K-energy, the degenerate complex Hessian equation.
The second part is on the weighted metrics, for which we investigate as well
their geometric properties.
\end{abstract}
\tableofcontents
\section{Introduction}
Let $(M,\, \omega)$ be a compact closed K\"ahler manifold of complex dimension $n$ and let $\Om$ be the K\"ahler class
of $\omega$;
namely, $\omega \in \Om \in H^2_{dR}(M, \, \mathbb{R})$.
The space of K\"ahler metrics $\mathcal{H}$ corresponding to the K\"ahler class $\Om$ is the
subset of $\Om$ containing all K\"ahler metrics.
We will explain in next section that the space of K\"ahler metrics
is in  one to one correspondence both to the space of K\"ahler potentials \eqref{potential}
under the normalization condition \eqref{ding},
and to the space of volume conformal factors \eqref{conformal}
and also to the space of K\"ahler one-forms \eqref{one form}.

In \cite{Calabi},
Calabi proposed a variational problem on  minimizing the Calabi energy in the space of K\"ahler metrics.
Any minimizer is called an extremal metric.
Calabi's extremal metrics generalize the constant scalar curvature K\"ahler metrics;
as proved by Calabi in \cite[page 99, Theorem 4)]{Calabi2}, whenever a space of  K\"ahler metrics $\mathcal{H}$ admits
an extremal K\"ahler metric, then the vanishing of the Calabi-Futaki invariant is equivalent to the
existence of a constant scalar curvature K\"ahler metric in $\mathcal{H}$.
The underlying geometric structure of the moduli space $\mathcal{H}$ has been extensively studied in recent years.
In this paper, we study the intrinsic geometry of the space of K\"ahler metrics under various Riemannian metrics.

The most studied Riemmanian metric in $\mathcal{H}$ is the \emph{Mabuchi metric} which is defined in Mabuchi \cite{Mab},
Donaldson \cite{Don} and Semmes \cite{Sem} independently. Under this metric,
$\mathcal{H}$ becomes a non-positively curved infinite-dimensional symmetric space.
Semmes \cite{Sem} pointed out that the geodesic equation in $\mathcal{H}$
is a homogeneous complex Monge-Amp\`ere equation. Donaldson \cite{Don} conjectured
that $\mathcal{H}$ endowed with the Mabuchi metric is geodesically convex and is a metric space
and pointed out the intensive relation between the geodesics of $\mathcal{H}$ and the existence (through geodesic stability),
uniqueness (through convexity along the geodesics) of the extremal metrics. We refer readers to our paper \cite{CalamaiZheng} for complete references. In this paper \cite{CalamaiZheng}, we find geometric conditions on the Dirichlet boundary values with less regularity which assure the existence and uniqueness of $C^{1,1}$ geodesic segments.


A second Riemannian structure on the space of volume conformal factors \eqref{conformal} was hinted in
\cite{Calabi} and recently developed in detail by the first author in \cite{Calamai};
it is called  \emph{the Calabi metric} \eqref{calabimetric}. It enjoys nice properties,
such as a positive constant sectional curvature (while the Mabuchi metric has
non-constant non-positive sectional curvature), and it induces a metric space structure on
$\mathcal{H}$ (as well as the Mabuchi metric does).
Moreover, the Calabi metric endows $\mathcal{H}$ with a richer geometry than the Mabuchi one;
both for the Cauchy and the Dirichlet problems, there are explicit, real analytic solutions.
The work done in \cite{Calamai} immediately lead a good deal of advances on the Calabi metric.
In \cite{ClaRub}, it is shown that the restriction of the Ebin metric \cite{Ebin}, defined on the space $\mathcal{M}$ of
all Riemannan metrics, to the space $\mathcal{H}$ is precisely the Calabi metric,
and the metric completion of $\mathcal{H}$ endowed with the Calabi metric is explicitly computed;
in \cite{ClaRub2} Calabi metric was generalized in order to make the space $\mathcal{H}$
as totally geodesic as possible in $\mathcal{M}$ endowed with the Ebin metric.

In Section $3$, we introduce a third way to describe the space of K\"ahler metrics, which is
from the point of view of differential forms.
In the space of K\"ahler one-forms \eqref{one form}, we define the Dirichlet metric as a canonical $L^2$-inner
product and see how it induces an almost complex structure naturally (cf. Remark \ref{rmk: almost complex structure}).

A motivation for the study of this metric comes from \cite{ChenZheng};
there, Chen and the second author showed that the pseudo-Calabi flow is the gradient
flow of the $K$-energy when $\mathcal{H}$ is endowed precisely with the Dirichlet metric.
In the same paper, they also developed a geometric method based on $\mathcal{H}$
and proved the stability of the the pseudo-Calabi flow near a cscK metric.
In this paper, we prove its counterpart in the variational problem:
the $K$-energy is convex at a point which corresponds to a constant
scalar curvature K\"ahler metric (cf. Proposition \ref{prop: convexity at a cscK metric}).
We also present a formulation for the geodesic equation and we present the conjecture
that  the regularity of geodesics (if they exist)
is greater than the $C^{1,1}$ regularity of Chen's geodesics
and smaller that the $C^{\omega}$ regularity of Calabi's geodesics.

Now let us focus on the "shape" of the space $\mathcal{H}$.
It is well known by the work of Mabuchi \cite{Mab}, that
the Mabuchi metric has  non-positive sectional curvature.
On the other hand the Calabi metric, as shown in \cite{Calamai},
has positive constant sectional curvature. When $M$ is a Riemann surface,
the space $\mathcal{H}$, endowed with the Dirichlet metric, becomes flat.
For manifolds of higher dimension, it is not known whether $\mathcal{H}$ endowed with
the Dirichlet metric is flat. Calabi suggests that the sectional curvature of the
Dirichlet metric should be between the sectional curvature of the Mabuchi metric and of the Calabi metric.
In this paper, we first find out the explicit formula of the sectional curvature of the Dirichlet metric.
Then we prove that for a two plane
spanned by tangent vectors $\psi_1 , \psi_2 \in T_\phi \mathcal{H}$, the sectional curvature of the Dirichlet metric
is bounded above and below by a constant $K$ which depends only on the point
$\phi$ and on one generator $\psi_1$ (cf. Theorem \ref{thmsectionalcurvature}).

We find out the sufficient conditions to determine the signature of the sectional curvature, they are the degenerate complex Hessian equations
(see Remark \ref{rmk: complex hessian equation}).
This leads us to conjecture that unlike the case when $M$ is a Riemann surface, the sectional curvature is not flat.

In the second part of the paper, we introduce a family of Riemannian metrics
on the space $\mathcal{C}$ of volume conformal factors, and we call them weighted metric.
In particular, the weighted metric whose weight function $\chi$ is $\chi(x)=x^{-1}$, is precisely the Calabi metric.
For each of the weighted metrics, we compute its sectional curvature (Theorem \ref{thm: formula of sectional curvature})
and its geodesic equation (Proposition \ref{prop: geodesic equation}).
We call the metric corresponding to the weight function $\chi(x) = 1$ the constant weight metric.
The constant weight metric enjoys several nice properties.
\begin{itemize}
\item It is a flat Riemannian metric (cf. Corollary \ref{cor: sectional curvature of constante weight metric});
\item Its geodesics are smooth lines (cf. Theorem \ref{thm: geodesics for the constant weight metric});
\item The $K$-energy functional  is convex along the geodesic in the first Chern class when $C_1\leq0$.
\end{itemize}
When $C_1\leq0$, these properties enable us to give an alternative proof of the uniqueness
of the K\"ahler-Einstein metrics (cf. \thmref{unike}). We also find a new functional and we prove that its gradient flow also convergence to the K\"ahler-Einstein metrics (cf. \thmref{exke}).


\section{Notations and definitions}
With $M, \omega$ and $\Om$ as in the Introduction, recall that the space of K\"ahler metrics is
\begin{align}
\mathcal H=\{\om\vert\om\text{ is a K\"ahler metric in }\Om\} \; .
\end{align}
In the given K\"ahler class $\Om$ the total volume of $M$ is a fixed constant that only depends on K\"ahler class,
and we call it $V$ in the remainder.

Choose a K\"ahler metric $\omega_1$ in the K\"ahler class;
according to the $\p\bar\p$-lemma, for any smooth K\"ahler metric,
there exists a unique (up to a constant) smooth function $\vphi$ (called K\"ahler potential)
such that $\omega_1=\om+i\p\bar\p\vphi$.
So in order to assure the uniqueness of the K\"ahler potential,
we require a normalization condition by means of the following functional on $\mathcal{H}$
\begin{align}\label{ding}
D_\om(\vphi)=\sum_{i=0}^n\frac{n!}{(i+1)!(n-i)!}
\int_{M}\vphi \, \om^{n-i}\wedge(\p\bar\p\vphi)^i \; .
\end{align}
Since its first variation is $\int_M\dot\vphi\om_\vphi^n$, we see that the Monge-Amp\`{e}re
operator is the Euler-Lagrange operator of $D_\om$.

Then the space of K\"ahler metrics is one to one correspondence to the space of
K\"ahler potentials which is defined to be the set containing all the K\"ahler
potentials with the following normalization condition, i.e.
\begin{align}\label{potential}
\mathcal{H}=\{\vphi\in C^{\infty}(M , \, \mathbb{R}) \; \vert \; \om+i\p\bar\p\vphi>0, D_\om(\vphi)=0\} \; .
\end{align}
Whenever we have an element $\alpha \in C^{1}([0,1]\times M ,\, \mathbb{R})$
such that $\alpha (t, \cdot ) \in \mathcal{H}$, for any $t\in [0,1]$, then
we call it a \emph{differentiable curve},
and we write it simply as $\alpha : [0,1] \rightarrow \mathcal{H}$ when no confusion arises.
By means of the characterization \eqref{potential} and of differentiable curves,
the tangent space of $\mathcal{H}$ at a point $\phi$ can be expressed as
\begin{align*}
 T_\phi \mathcal{H} = \left\{ \psi \in C^{\infty}(M , \, \mathbb{R}) \, \left| \, \int_M \psi \frac{\omega_\phi^n}{n!} =0
 \right. \right\}\; .
\end{align*}
In the remainder of the paper, we will need to consider differentiable curves of tangent vectors, in the following sense.
Given a differentiable curve $\alpha$ as before, we will consider a map
$\beta \in C^{1}([0,1]\times M ,\, \mathbb{R})$ such that, for any $t\in [0,1]$, then
$\beta (t, \cdot ) \in T_{\alpha (t , \, \cdot)}\mathcal{H}$.

The Mabuchi metric in $\mathcal{H}$ is defined as follows;
for any two tangent vectors $\psi_1,\psi_2$ in the tangent space of $\mathcal{H}$ at $\phi$, it is given by
\begin{align}\label{eqn: definition of mabuchi metric}
 Ma_{\phi}(\psi_1,\psi_2)=\int_M\psi_1\psi_2\frac{\om_\phi^n}{n!} \; .
\end{align}
In the given K\"ahler class $\Om$, the corresponding volume forms form a conformal class.
This is the second description of the space of K\"ahler metrics.
\begin{defn}\label{conformal}
The \emph{space of volume conformal factors} is given by the following space
\begin{align*}
 \mathcal{C}:=
 \left\{
   F\in C^{\infty}(M , \, \mathbb{R} )\, \left|  \,
   F> 0 , \, \int_M F \omega^n = \int_M \omega^n  \right.
 \right\}\;.
\end{align*}
\end{defn}
The notions of differentiable curves,
tangent vectors, curves of tangent vectors on $\mathcal{C}$ are in exactly the same vein as
it was for the space $\mathcal{H}$.
For any fixed conformal factor $F\in \mathcal{C}$, we have that the tangent space of
$\mathcal{C}$ at $F$ is
\begin{align}\label{eqn:tangent vectors}
 T_F\mathcal{C}=
 \left\{
 G\in C^{\infty}(M,\mathbb{R})
 \; \left|  \;
 \int_M G \omega^n = 0
 \right.
 \right\}\; .
\end{align}

The Monge-Amp\`ere map, defined by
\begin{align*}
MA : \mathcal{H} &\longrightarrow \mathcal{C}\\
\phi &\mapsto MA(\phi) := F(\phi) = \left( \frac{ \omega_\phi^n}{\omega^n} \right) ,
\end{align*}
is, by the positive answer to \emph{the Calabi volume conjecture} \cite{Yau},
a diffeomorphism between $\mathcal{H}$ and $\mathcal{C}$.
Its differential at $\phi \in \mathcal{H}$ is
\begin{align*}
MA_* [\phi] : T_{\phi} \mathcal{H} &\longrightarrow T_F \mathcal{C}\\
\psi & \mapsto G(\psi): = \left( \frac{ \omega_\phi^n}{\omega^n} \right) \Delta_{\phi}\psi \; ,
\end{align*}
where the expression of $\Delta_\phi$ in a local coordinate system $(U ,\, z^1 , \, \cdots , \, z^n)$
is $\Delta_\phi \psi = g_\phi^{j\overline{k}}\psi_{j\overline{k}}$, and $g_\phi$ is the K\"ahler
metric corresponding to the K\"ahler form $\omega_\phi$.

The Calabi metric is defined for any $\psi_1 , \psi_2 \in T_\phi \mathcal{H}$,
\begin{align}\label{calabimetric}
Ca_{\phi} (\psi_1 , \psi_2 ) : = \int_M\Delta_\phi \psi_1 \Delta_\phi \psi_2\frac{\omega_\phi^n}{n!} \; .
\end{align}
To be precise, Mabuchi computed that, if $\phi = \phi (s , t)$ is
a smooth two parameter family of curves in the space of K\"ahler metrics $\mathcal{H}$,
and the corresponding two parameter families of curves of tangent vectors $\phi_t$, $\phi_s$ along $\phi$
are $\mathbb{R}$-linearly independent,
then the sectional curvature of the plane spanned by $\phi_s$ and $\phi_t$
is expressed in terms of their Poisson bracket as
\[
 \{ \phi_s , \phi_t \}_\phi =
\frac{\sqrt{-1}}{2}
\left(
g^{i\overline{j}} \frac{\partial \phi_s}{ \partial z^i }\frac{\partial \phi_t}{ \partial z^{\overline{j}}}
-
g^{i\overline{j}} \frac{\partial \phi_t}{ \partial z^i }\frac{\partial \phi_s}{ \partial z^{\overline{j}}}
\right)
=
Im(\partial \phi_s , \overline{\partial} \phi_t )_\phi \; .
\]
The expression of the sectional curvature $K_M$ for the Mabuchi metric is therefore
\[
 K_M (\phi_s , \phi_t )_\phi = -\frac{\int_M Im (\partial \phi_s , \overline{\partial} \phi_t)_\phi^2
\frac{\omega_\phi^n}{n!}}
{\sqrt{\int_M \phi_s^2 \frac{\omega_\phi^n}{n!}}
\sqrt{\int_M \phi_s^2 \frac{\omega_\phi^n}{n!}}
- \int_M \phi_s \phi_t \frac{\omega_\phi^n}{n!}} \; .
\]
We read off the above equation that $K_M \leq 0$ for all the linearly independent
sections $\phi_s , \phi_t$. On the other side, the first author proved that, for any linearly
independent sections $\phi_s , \phi_t$  the sectional curvature for the Calabi metric $K_C$ is
\[
 K_C (\phi_s , \phi_t) = \frac{1}{4V}\; ;
\]
here $V$ is the volume of the manifold $M$.
Eugenio Calabi conjectured that the sectional curvature for the Dirichlet metric
is bounded above and below respectively by the Mabuchi and the Calabi sectional curvatures.
In \cite{Calamai},
the first author proved that if $M$ has complex dimension one,
then for any linearly independent sections $\phi_s , \phi_t$,
the sectional curvature for the Dirichlet metric vanishes;
\[
 K_D (\phi_s , \phi_t) =0 \; .
\]
As a corollary, for closed Riemann surfaces we have that
$$
K_{\mbox{Mabuchi}} \leq K_{\mbox{Dirichlet}} < K_{\mbox{Calabi}}.
$$
One aim of the present paper is to take a step further towards the above bounds then
the complex dimension of the manifold $M$ is bigger than one. (cf. Subsection \ref{subsection sectional curvature}.)
\smallskip
\section{The Dirichlet metric}
We now want to introduce the Dirichlet metric. We shall use the point of view of differential forms.
In the present section we are going to make repeatedly use of the Einstein notation.

\begin{defn}
We define the following space
\begin{align}\label{one form}
 \mathcal{A}:= \{ d\phi \, | \, \omega_\phi \mbox{ is a K\"ahler metric in } \mathcal{H} \} \; .
\end{align}
We note that the space $\mathcal{A}$ is a $\mathbb{R}$-convex, open subset in the space of real $1$-forms;
indeed, this follows from the fact that the space $\mathcal{H}$ is $\mathbb{R}$-convex, that is if $\phi_0 ,\, \phi_1$
are in $\mathcal{H}$, then for any $t\in[0,1]$ also $(1-t)\phi_0 + t\phi_1$ is in $\mathcal{H}$.
\end{defn}

\begin{lem}\label{lemma:H one to one with A}
The space $\mathcal{A}$ is in one-to-one correspondence
with the space of K\"ahler metrics $\mathcal{H}$.
\end{lem}
\begin{proof}
Consider the map $\Phi : \mathcal{H} \rightarrow \mathcal{A}$, which maps a K\"ahler metric $\omega_\phi$
to the element $d\phi \in \mathcal{A}$.
The map is well defined. Indeed, if two K\"ahler metrics $\omega_{\phi_1}$
and $\omega_{\phi_2}$ coincide,
then they differ by an additive real constant, say $\phi_1 = \phi_2 +C$.
Thereby, we have $d\phi_1 = d\phi_2$.
The map is surjective by definition of the space $\mathcal{A}$.
Concerning the injectivity, if we have $d\phi_1 = d\phi_2$,
then we also have $\partial \overline{\partial}\phi_1 = \partial \overline{\partial}\phi_2$
and thereby the corresponding K\"ahler metrics
$\omega_{\phi_1}$ and $\omega_{\phi_2}$ coincide.
\end{proof}

\smallskip
As an application of Lemma \ref{lemma:H one to one with A}, we can define differentiable curves on $\mathcal{A}$ as
the image under the map $\Phi$ of the already defined differentiable cruves on $\mathcal{H}$. And using differentiable curves,
or just differentiating the map $\Phi$, we conclude that the tangent space of $\mathcal{A}$  at a point $d\phi$ is given by
\[
T_{d\phi}\mathcal{A} =\left\{ d\psi \, | \,  \psi \in C^{\infty} (M , \mathbb{R}) \right\} \, ,
\]
that is, it is the space of exact real $1$-forms on $M$.

\begin{rmk}\label{rmk: almost complex structure}
The infinite dimensional Riemannian manifold $\mathcal{A}$ carries a natural almost complex structure $J$ defined as
\begin{align*}
J\p\psi=i\p\psi; J\bar\p\psi=-i\bar\p\psi.
\end{align*}
\end{rmk}

We are ready to present a central notion of this paper.
\begin{defn}\label{defn:dirichlet metric}
Consider a point $d\phi \in \mathcal{A}$ and two tangent vectors $d\psi , d\chi \in T_{d\phi}\mathcal{A}$;
let $g_\phi$ be the K\"ahler metrics corresponding to the element $d\phi \in \mathcal{A}$.
Consider a local coordinate chart $(U , \, z_1 , \cdots , \, z_n)$ on $M$.
Then, let the pairing $(d\psi , \, d\chi)_{g_{\phi}}$ be defined, at a point $p\in U$, by
$$
(d\psi , d\chi)_{g_{\phi}} = g_{\phi}^{i\overline{j}}(\psi_i \chi_{\overline{j}} + \psi_{\overline{j}} \chi_{i}) \, ,
$$
where the Einstein notation is used.
Now, the Dirichlet metric on $T_{d\phi}\mathcal{A}$ can be given as
\[
Di_{d\phi}<d\psi , d\chi >:=
<d\psi , d\chi >_{g_\phi} =
\int_M (d\psi , d\chi)_{g_{\phi}}  \frac{\omega_\phi^n}{n!} \; .
\]
\end{defn}
As our goal is to prove the features of the geometry arising from the Dirichlet metric,
it is natural to start with the study of the Levi-Civita covariant derivative of that metric.
\medskip
\subsection{Levi-Civita covariant derivative}
The Levi-Civita covariant derivative on the space $\mathcal{H}$ endowed with a Riemannian structure
is, analogously to the case of standard Riemannian geometry, a way of differentiating tangent vector fields along
curves on $\mathcal{H}$. For the precise definition of the Levi-Civita covariant derivative,
we refer to \cite[Definition 7]{Calamai}.
As remarked in \cite{Don} its existence needs to be proved  in our infinite dimensional environment,
unlike the finite dimensional Riemannian geometry theory.
The proof that the Levi-Civita covariant derivative for the Dirichlet metric exists
can be found in \cite{Calamai}.
Here, for the reader's convenience,
we are going to represent those results with the slightly different approach of the previous section.
To start off, we introduce a notation which will be very useful here and in the remainder of the paper.
\begin{defn}\label{tensorC}
Fix an element $d\phi \in \mathcal{A}$ and its corresponding K\"ahler form $\omega_\phi$.
Let $f\in C^{\infty}(M ,\, \mathbb{R})$ be a smooth real valued function.
We define the $(1,1)$ differential form $C[f]$
by $C[f] : = (\Delta_\phi f) \omega_\phi - i \p \bar\p f $, where we recall that in a coordinate
chart $(U , \, z^1 , \cdots ,\, z^n )$, the expressions of the K\"ahler form $\omega_\phi$ and
of its corresponding K\"ahler metric $g_\phi$ are $g_{\phi} = g_{a\bar b}dz^a \otimes dz^{\bar b}$
and $\omega_\phi = g_{a\bar b} dz^a \wedge dz^{\bar b}$.
\end{defn}
\smallskip
\begin{lem} \label{lemmaclosedness}
Fix an $f\in C^{\infty}(M , \mathbb{R})$; after raising the indices of the differential form
$C[f]$, and and fixing any index $i$, we have, in a normal coordinate chart,
\[
 \sum_{j=1}^n (C[f]^{i\overline{j}})_{,\overline{j}} = 0.
\]
\end{lem}
\begin{proof}
 Fix a point $p$ in a normal coordinate chart $(U , z_1 , \cdots , z_n )$. For simplicity, we write $g$ instead of $g_\phi$.
 The operation of raising the indexes, when applied to $C[f]$ gives, for the component $i, \bar j$, the following output,
 (where for simplicity we write $\Delta$ instead of $\Delta_\phi$)
 \[
  C[f]^{i\bar j} =
  \left[
  (\Delta f) g^{i\bar b} g_{a\bar b} g^{a \bar j} - g^{i\bar b} \frac{\p^2 f}{\p z^a \p z^{\bar b}} g^{a \bar j}
  \right]
  \; .
 \]
Now, when we differentiate it with respect to $\frac{\p}{\p z^{\bar j}}$, we repeatedly use that, for any
indexes $a, b$, then in normal coordinates there holds $\frac{\p}{\p z^{\bar j}} g^{a\bar b} = 0$; we get that
\begin{align}\label{eqn: first addendum of C[f]}
 \frac{\p}{\p z^{\bar j}} \left[(\Delta f) g^{i\bar b} g_{a\bar b} g^{a \bar j} \right]
 = \left[\frac{\p}{\p z^{\bar j}} (\Delta f) g^{i\bar b} \right] g_{a\bar b} g^{a \bar j}
 = \left(\Delta \frac{\p}{\p z^{\bar j}} f \right) g^{i\bar b} g_{a\bar b} g^{a \bar j} \, ,
\end{align}
while
\begin{align}\label{eqn: second addendum of C[f]}
 \frac{\p}{\p z^{\bar j}}\left[ g^{i\bar b} \frac{\p^2 f}{\p z^a \p z^{\bar b}} g^{a \bar j} \right]
 =  g^{i\bar b} \frac{\p^3 f}{\p z^a \p z^{\bar b}\p z^{\bar j}} g^{a \bar j}.
\end{align}
Now, performing a summation with respect to the indexes $a, b , j$, we find that the two addenda \eqref{eqn: first addendum of C[f]}
and \eqref{eqn: second addendum of C[f]} of $C[f]$ coincide.
This completes the proof of the lemma.
\end{proof}
\smallskip

Another formula which will be useful in the remainder is the derivative of the Laplacian operator.

\begin{defn}\label{defn:asterisque}
Let us consider a pair of $(1,1)$-differential forms  $S, T $ and a
one-covariant tensor $\alpha$. Suppose that in a same coordinate chart $(U , \, z^1 , \, \cdots , \, z^n)$ there
holds $T = T_{j\bar k} dz^j \wedge dz^{\bar k}$, $S = S_{j\bar k} dz^j \wedge dz^{\bar k}$
and $\alpha = \alpha_{j} dz^j + \alpha_{\bar j} dz^{\bar j}$. We define the function $S*T$ at a point $p\in U$ as
$$
S * T :=
g_\phi^{j\overline{k}}g_\phi^{p\overline{q}}S_{j\bar q}
T_{p\bar k }.
$$
Notice that the operation $S*T$ is symmetric in $S$ and $T$.

We also need to label as well by $*$ the following contraction of a twice-covariant tensor $T$ and a
one-covariant tensor $\alpha$. The one-covariant
tensor $T*\alpha$ is defined, at a point $p\in U$, as
\begin{align}\label{contraction two tensor and one tensor}
 T*\alpha = g_\phi^{a\bar b} T_{a \bar k} \alpha_{\bar b} dz^{\bar k}
 +g_\phi^{a\bar b} T_{ k\bar b} \alpha_{a} dz^{ k} .
\end{align}
\end{defn}

\begin{rmk}
Consider a two parameter family of curves on $\mathcal{H}$; namely,
$\phi = \phi (s,t) : [0,1]\times[0,1] \rightarrow \mathcal{H}$. Thus we have the formula
\begin{align}\label{star}
\frac{\partial}{\partial t} \left( \Delta_\phi \frac{\partial \phi}{\partial s} \right) =
\Delta_\phi \frac{\partial^2 \phi}{\partial s \partial t}
-  \p \bar \p\frac{\partial \phi }{\partial t} *  \p \bar \p \frac{\partial \phi }{\partial s}.
\end{align}
\end{rmk}

\begin{defn}
Fix a smooth curve $\phi = \phi (t) : [0,1] \rightarrow \mathcal{H}$; also, let
$\psi = \psi (t)$ be a smooth curve of tangent vectors along the curve $\phi$,
with $\psi (t) \in T_{\phi(t)}\mathcal{H}$ for any $t\in [0,1]$.
Write $\phi_t$ for $\frac{\p \phi}{\p t}$.
Then, let $f[\phi_t , \psi] \in C^{\infty}(M,\mathbb{R})$ be such that
$\Delta_\phi f[\phi_t , \psi] :=  \p \bar \p \psi * \p \bar \p \phi_t - \Delta_\phi \psi \cdot \Delta_\phi \phi_t$.
Notice that, by the above remark, the function $f[\phi_t , \psi]$ is well defined.
\end{defn}

As we are dealing with real differential forms,
by means of the Hodge theory we decompose any real $1-$form $\alpha$ as
\[
\alpha = da + d^* \beta + h_\alpha ,
\]
where $a\in C^{\infty}(M , \mathbb{R})$, $\beta$ is a real $2-$form and $h_\alpha$
is the harmonic part of $\alpha$; that is, we have $dh_\alpha =0$ and $d^* h_\alpha =0$.
We also shall use the following notation for the natural projections;
\begin{align}\label{defnprojections}
 \pi_d (\alpha) =da, \qquad \pi_{d^*} (\alpha ) = d^* \beta ,
\qquad \pi_{harm} (\alpha) = h_{\alpha}\; .
\end{align}
  The next result is preliminary for the main one of this section.
\begin{lem}
 Let $d\phi(t)$ be a smooth curve in the space $\mathcal{A}$ and let  $d\psi = d\psi (t)$
 be a smooth curve of tangent vectors along the curve $d\phi$,
with $d\psi (t) \in T_{d\phi(t)}\mathcal{A}$ for any $t\in [0,1]$.
Then we have
\begin{align}\label{meanzero}
 \int_M ( \p \bar \p \psi * \p \bar \p \phi_t - \Delta_\phi \psi \cdot \Delta_\phi \phi_t) \frac{\omega_\phi^n}{n!} =0\; ;
\end{align}
also we have, in the notation of Definition \eqref{contraction two tensor and one tensor},
\begin{align}\label{theoryandpractice}
 df[\phi_t , \psi] = \pi_d
\left(
\frac{1}{2} C[\phi_{t} ]* d\psi
\right)\; .
\end{align}
\end{lem}
\smallskip
\begin{proof}
Concerning \eqref{meanzero}, it is readily computed as
\[
 0= - \frac{\partial}{\partial t} \int_M \Delta_\phi \psi  \frac{\omega_\phi^n}{n!} =
\int_M ( \p \bar \p \psi * \p \bar \p \phi_t - \Delta_\phi \psi \cdot \Delta_\phi \phi_t) \frac{\omega_\phi^n}{n!}\; .
\]

About \eqref{theoryandpractice} let us call for convenience
\[
 \alpha =
\frac{1}{2} C[\phi_{t} ]* d\psi\; ;
\]
then, let $\tilde{f}$ be any real valued smooth function on $M$ and let us consider the integral
\begin{align*}
& \int_M \tilde{f} \cdot d^*\alpha \frac{\omega_\phi^n}{n!}
= \int_M (\alpha , d\tilde{f})_{g_\phi}  \frac{\omega_\phi^n}{n!}\\
\end{align*}
which we integrate by parts again to get
\begin{align*}
&= \int_M \tilde{f}\cdot\left( \p \bar \p \phi_t * \p \bar \p \psi - \Delta_\phi \phi_t \cdot \Delta_\phi \psi \right)
      \frac{\omega_\phi^n}{n!}\, ,\\
\end{align*}
where at the last equality we applied Lemma \ref{lemmaclosedness}.
From this we infer that
\[
 d^*\alpha  =  \p \bar \p \phi_t * \p \bar \p \psi - \Delta_\phi \phi_t \cdot \Delta_\phi \psi;
\]
but, in view of \eqref{meanzero}
we deduce that $ \p \bar \p \phi_t * \p \bar \p \psi - \Delta_\phi \phi_t \cdot \Delta_\phi \psi = \Delta_\phi f$,
for some smooth real valued function $f$.
Putting our latest equalities together, we get $d^* \alpha = d^* d f$,
that is $\pi_d (\alpha ) = df$ and \eqref{theoryandpractice} is proved.
This concludes the proof of the lemma.
\end{proof}
\smallskip
We are now ready to present the covariant derivative for the Dirichlet metric in this environment.
\smallskip
\begin{prop}
Let $d\phi(t)$ be a smooth curve in the space $\mathcal{A}$ and
let $d\psi = d\psi (t)$ be a smooth curve of tangent vectors along the curve $d\phi$,
with $d\psi (t) \in T_{\phi(t)}\mathcal{A}$ for any $t\in [0,1]$;
 then the formula
\begin{align}\label{Levi-civita_con_der_second_form}
D_t d\psi = d\frac{\partial \psi}{\partial t}
+ \frac{1}{2} \pi_d \left(
C[\phi_{t} ]*d\psi
\right)
\end{align}
gives the Levi-Civita covariant derivative for the Dirichlet metric on the space $\mathcal{A}$.
\end{prop}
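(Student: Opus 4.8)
The plan is to verify directly that the operator $D_t$ defined by \eqref{Levi-civita_con_der_second_form} has the two properties that characterize the Levi-Civita connection, namely that it is \emph{torsion free} and \emph{compatible with the Dirichlet metric}. As recalled after \cite{Don}, in this infinite dimensional setting such a verification is precisely what is needed, since a priori neither existence nor uniqueness is granted; what has to be checked first is that the right hand side of \eqref{Levi-civita_con_der_second_form} is a well defined section of $T\mathcal{A}$ along the curve. This is immediate: the expression depends only on the differential $d\psi$ and not on the chosen potential $\psi$ (only $\psi_i$, $\psi^i$ and $d\partial_t\psi$ enter, and the latter is unchanged if $\psi$ is altered by a function of $t$ alone), and it is an exact real $1$-form, since $d\partial_t\psi$ is exact and $\pi_d$ takes values in exact forms; hence $D_td\psi\in T_{d\phi}\mathcal{A}$.

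First I would prove that $D_t$ is torsion free. Let $d\phi(s,t)$ be a two parameter family in $\mathcal{A}$ and write $\phi_s=\partial\phi/\partial s$, $\phi_t=\partial\phi/\partial t$; since the coordinate fields $\partial_s,\partial_t$ commute, torsion freeness amounts to $D_t\,d\phi_s=D_s\,d\phi_t$. Applying \eqref{Levi-civita_con_der_second_form}, the leading terms $d\,\partial_t\phi_s$ and $d\,\partial_s\phi_t$ agree because $\phi_{st}=\phi_{ts}$. For the correction terms, I invoke the Lemma preceding the Proposition with $\psi=\phi_s$: formula \eqref{theoryandpractice} identifies $\frac{1}{2}\pi_d\big(C[\phi_t]_{i\overline j}(\phi_s^i\,dz^{\overline j}+\phi_s^{\overline j}\,dz^i)\big)$ with $df[\phi_t,\phi_s]$, where $\Delta_\phi f[\phi_t,\phi_s]=H\phi_s * H\phi_t-\Delta_\phi\phi_s\cdot\Delta_\phi\phi_t$. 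As the operation $*$ is symmetric in its two arguments, the right hand side here is symmetric under $s\leftrightarrow t$, so $df[\phi_t,\phi_s]=df[\phi_s,\phi_t]$ and therefore $D_t\,d\phi_s=D_s\,d\phi_t$.

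Next I would establish metric compatibility: along a curve $d\phi(t)$ carrying smooth sections $d\psi(t)$, $d\chi(t)$, one must show $\partial_t\langle d\psi,d\chi\rangle_{d\phi}=\langle D_td\psi,d\chi\rangle_{d\phi}+\langle d\psi,D_td\chi\rangle_{d\phi}$. Differentiating $\int_M g_\phi^{i\overline j}(\psi_i\chi_{\overline j}+\psi_{\overline j}\chi_i)\,\omega_\phi^n/n!$ under the integral sign produces three groups of terms: the derivative hitting $\psi$ and $\chi$ gives $\langle d\,\partial_t\psi,d\chi\rangle_{d\phi}+\langle d\psi,d\,\partial_t\chi\rangle_{d\phi}$; the derivative of the inverse metric, $\partial_t g_\phi^{i\overline j}=-g_\phi^{i\overline l}g_\phi^{k\overline j}\phi_{t,k\overline l}$; and the derivative of the volume form, which brings down a factor $\Delta_\phi\phi_t$. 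Calling $R$ the sum of the last two groups, and using \eqref{theoryandpractice} to rewrite the correction term of \eqref{Levi-civita_con_der_second_form} as $df[\phi_t,\cdot]$ when paired against a $d(\,\cdot\,)$, what remains is the identity $R=\langle df[\phi_t,\psi],d\chi\rangle_{d\phi}+\langle df[\phi_t,\chi],d\psi\rangle_{d\phi}$. I would prove this by integrating by parts on the right hand side — legitimate because $M$ is closed and harmonic parts drop out — using \lemref{lemmaclosedness} (so that the contractions of the divergence free tensor $C$ unload cleanly), the evolution formula \eqref{star} for $\partial_t(\Delta_\phi\psi)$, and \eqref{meanzero}; both sides then reduce to integrals against the functions $f[\phi_t,\psi]$ and $f[\phi_t,\chi]$ furnished by the preceding Lemma, and the match is a tensorial computation that is cleanest in normal coordinates.

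I expect the metric compatibility step to be the main obstacle. The delicate point is a mismatch of symmetry: the remainder $R$ is manifestly symmetric in $\psi\leftrightarrow\chi$, whereas each correction pairing carries a term of the form $\langle df[\phi_t,\psi],d\chi\rangle$ involving $H\psi * H\phi_t$, in which $\psi$ sits inside the $*$-contraction while $\chi$ appears only as a bare factor; reconciling the two sides forces one to transfer derivatives through \lemref{lemmaclosedness} and to use \eqref{meanzero} to absorb the ambiguity in the choice of $f$, all while keeping exact account of the two $g_\phi^{i\overline j}$ contractions and the volume factor $\Delta_\phi\phi_t$. Once this identity is verified, $D_t$ is a torsion free metric connection, hence it is the Levi-Civita covariant derivative of the Dirichlet metric, and the Proposition follows.
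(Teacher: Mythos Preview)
Your proposal is correct and follows the same overall architecture as the paper: check that the formula lands in $T_{d\phi}\mathcal{A}$, then verify metric compatibility and torsion-freeness. Two tactical differences are worth noting.

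For torsion-freeness you take a cleaner route than the paper. You use \eqref{theoryandpractice} to identify the correction term with $df[\phi_t,\phi_s]$ and then observe that $\Delta_\phi f[\phi_t,\phi_s]=H\phi_s*H\phi_t-\Delta_\phi\phi_s\,\Delta_\phi\phi_t$ is manifestly symmetric in $s,t$, so the two corrections coincide. The paper instead pairs $D_t d\phi_s-D_s d\phi_t$ against an arbitrary $d\tilde f$, drops the projection by orthogonality, and integrates by parts via \lemref{lemmaclosedness} to get zero. Your argument is shorter and avoids the test-function step.

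For metric compatibility you overcomplicate matters. The identity you isolate, $R=\langle df[\phi_t,\psi],d\chi\rangle+\langle df[\phi_t,\chi],d\psi\rangle$, is right, but you do not need \eqref{star} (there is no $\partial_t(\Delta_\phi\psi)$ left to compute once $R$ is written down), nor \eqref{meanzero}, nor any integration by parts. The paper's route, and the direct one, is: by orthogonality of $\pi_d$ against exact forms, $\langle df[\phi_t,\psi],d\chi\rangle=\frac12\langle C[\phi_t]_{i\overline j}(\psi^i dz^{\overline j}+\psi^{\overline j}dz^i),d\chi\rangle$; summing with the $\chi$-term and expanding $C[\phi_t]_{i\overline j}=\Delta_\phi\phi_t\,g_{i\overline j}-\phi_{t,i\overline j}$ matches $R$ pointwise under the integral. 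So your worry about the ``symmetry mismatch'' dissolves once you remove the projection rather than passing through the potential $f$.
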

\smallskip
\begin{proof}
First of all, let us notice that the right hand side of \eqref{Levi-civita_con_der_second_form}
is an exact one form; that is the operator $D_t$ takes values in $T_{d\phi}\mathcal{A}$,
as it must be.

About the metric compatibility (for its very definition, see \cite[Definition 7, (1c)]{Calamai}), we compute
\begin{align}\label{metric_compatibility}
& \frac{\partial}{\partial t} < d\psi , d \chi >_{d\phi} =
\frac{\partial}{\partial t} \int_M (d\psi , \, d\chi)_{g_{\phi}}
 \frac{\omega_\phi^n}{n!} \nonumber\\
& = - \int_M [(\p \bar \p \phi_t * \p \psi , \, \bar \p \chi )_{g_\phi}
+ (\p \bar \p \phi_t * \p \chi , \, \bar \p \psi )_{g_\phi} ] \frac{\omega_\phi^n}{n!} \nonumber\\
& + \int_M [(d\psi_t , \, \chi )_{g_{\phi}}
+  (d\chi_t , \, \psi )_{g_{\phi}} ]
  \frac{\omega_\phi^n}{n!} \nonumber\\
&  +\int_M (d\psi , \, d\chi )_{g_{\phi}} \left( \Delta_{\phi}
\frac{\partial \phi }{\partial t} \right) \frac{\omega_\phi^n}{n!}. \nonumber\\
\end{align}
Inserting the equation \eqref{Levi-civita_con_der_second_form} into the equation
\eqref{metric_compatibility} we have, by means of \eqref{theoryandpractice}, the compatibility, i.e.
\[
\frac{\partial}{\partial t} < d\psi , d \chi >_{g_\phi} =
< D_t d\psi , d\chi  >_{g_\phi}  + < d\psi , D_t d\chi  >_{g_\phi}.
\]

In order to show that the expression \eqref{Levi-civita_con_der_second_form} is torsion free
(see \cite[Definition 7, (1d)]{Calamai}),
let  $d\phi (s, t)$ be a smooth two-parameter family of curves on $\mathcal{A}$,
and let us compute
\begin{align}\label{torsionfreetwo}
& D_t d\phi_s - D_s d\phi_t= \nonumber \\
&= \frac{1}{2}  \pi_d \left(
C[\phi_{t}] *d \phi_s
-C[\phi_{s}]* d\phi_t
\right) .
\nonumber \\
\end{align}
Now we claim that, for any $d\tilde{f} \in T_{d\phi} \mathcal{A}$, we have
\begin{align}\label{torsionfreeclaim}
<D_t d\phi_s - D_s d\phi_t , d\tilde{f} >_{g_\phi} = 0.
\end{align}
Indeed we have
\begin{align*}
&2<D_t d\phi_s - D_s d\phi_t , d\tilde{f} >_{g_\phi}  \\
&= \int_M \left( \frac{1}{2}   \left[
C[\phi_{t}] * d\phi_s -C[\phi_{s}] * d\phi_t
\right]  , d\tilde{f} \right)_{g_\phi}  \frac{\omega_\phi^n}{n!}\\
&= -\int_M   \left\{
\left(
\p \bar \p \phi_{t}* \p \bar \p \phi_s
 +  \Delta_\phi \phi_{t} \Delta_\phi \phi_s
\right) \tilde{f}
-
\left(
\p \bar \p \phi_{t}* \p \bar \p \phi_s
 +  \Delta_\phi \phi_{t} \Delta_\phi \phi_s
\right)
\tilde{f} \right\}\frac{\omega_\phi^n}{n!}=0,\\
\end{align*}
where at the first equality we  used the fact that the projections \eqref{defnprojections} are
orthogonal with respect to the Dirichlet metric;
while at the last equality we integrated by parts and we applied Lemma \ref{lemmaclosedness}.
Thereby, claim \eqref{torsionfreeclaim} is proved and the expression
\eqref{Levi-civita_con_der_second_form} is the Levi-Civita covariant derivative for
the space $\mathcal{A}$ endowed with  the Dirichlet metric.
\end{proof}
\medskip
In the following discussion we introduce the notation $D_t \phi_s$ which closely resembles the
already defined element $D_t d\phi_s$. In fact, letting $\phi_s $ and $d\phi_s$ be corresponding elements
via the map $\Phi_*$ defined in Lemma \ref{lemma:H one to one with A},
it turns out that $D_t \phi_s$ which we are going to define corresponds to the already defined $D_t d\phi_s$.
This will be useful in Remark \ref{remark: one}.
\begin{rmk}
We notice that, with the notation of Definition \ref{tensorC}, we have the following nice formula
\begin{align*}
\frac{\partial}{\partial t} < d\psi , d\chi >_{g_\phi} =
\int_M \left( ( C[ \phi_t ] * d\psi , d\chi )_{g_\phi}
+ ( d\frac{\partial \psi}{\partial t}, d \chi )_{g_\phi}
+ ( d\psi , d\frac{\partial \chi}{\partial t})_{g_\phi}\right) \frac{\omega_\phi^n}{n!}.
\end{align*}
\end{rmk}
\medskip
\begin{rmk}
In the notation of the space $\mathcal{H}$,
the Dirichlet metric is the pairing
\[
 < \chi , \psi >_\phi = \frac{1}{2}\int_M (d\chi , d\psi )g_\phi \frac{\omega_\phi^n}{n!},
\]
where $\psi, \chi \in T_\phi \mathcal{H}$.
By means of the uniqueness of the Levi-Civita covariant derivative, which still holds in this infinite
dimensional environment (see \cite{Don}),
our expression \eqref{Levi-civita_con_der_second_form}
is equivalent to what we had found in \cite{Calamai},
that is
\begin{align}\label{levicivita}
\Delta_\phi D_t \psi = \Delta_\phi \frac{\partial \psi}{\partial t}  +
\left( \Delta_\phi  \frac{\partial \phi}{\partial t} \right)
( \Delta_\phi \psi ) + \frac{\partial ( \Delta_\phi \psi ) }{\partial t}.
\end{align}
\end{rmk}
\bigskip
\subsection{Sectional curvature}\label{subsection sectional curvature}
In the present subsection we are going to prove that the sectional
curvature of the Dirichlet metric for a two plane
spanned by tangent vectors $\phi_s , \phi_t \in T_\phi \mathcal{H}$
is bounded above and below by a constant $K$ which depends only on the point
$\phi$ and on one generator $\phi_s$.
This led us to conjecture that unlike the case when $M$ is a Riemann surface,
the sectional curvature is not flat.

The main result of the section will be the following.
\begin{thm}\label{thmsectionalcurvature}
 If $d\phi(s,t)$ a smooth two parameter family of curves in $\mathcal{A}$
such that $d\phi_s , d\phi_t$ are $\mathbb{R}$-linearly independent at $d\phi$,
than there exists a positive constant $K=K(\omega_\phi , d\phi_s)$,
such that we have the following bounds for the sectional curvature of the Dirichlet metric
$$
 -K(\omega_\phi , d\phi_s)
\leq
K_{\mbox{ \emph{Dirichlet} }}   (d\phi_s, d\phi_t)_\phi
\leq
K(\omega_\phi , d\phi_s) .
$$
\end{thm}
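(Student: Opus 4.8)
The plan is to compute the curvature tensor of the Dirichlet metric directly from the Levi--Civita connection of \S3 and then to estimate the sectional curvature pairing. Recall that, with the standard sign conventions, $K_{\text{Dirichlet}}(d\phi_s,d\phi_t)_\phi$ equals $\langle R(d\phi_s,d\phi_t)d\phi_t,d\phi_s\rangle_{d\phi}$ divided by $\|d\phi_s\|_{d\phi}^2\|d\phi_t\|_{d\phi}^2-\langle d\phi_s,d\phi_t\rangle_{d\phi}^2$. Since the sectional curvature of a two--plane depends only on the plane, I would first replace $d\phi_t$ by its component $d\phi_t^{\perp}$ orthogonal to $d\phi_s$ in the Dirichlet metric; this is again an exact real one--form, coming from a smooth function of zero mean against $\omega_\phi^n/n!$, and by the usual symmetries of $R$ the substitution leaves the numerator unchanged while turning the denominator into $\|d\phi_s\|_{d\phi}^2\,\|d\phi_t^{\perp}\|_{d\phi}^2$. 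Hence it suffices to establish a bound of the shape $\bigl|\langle R(d\phi_s,d\phi_t)d\phi_t,d\phi_s\rangle_{d\phi}\bigr|\le C(\omega_\phi,d\phi_s)\,\|d\phi_t\|_{d\phi}^2$ and then to take $K(\omega_\phi,d\phi_s):=C(\omega_\phi,d\phi_s)/\|d\phi_s\|_{d\phi}^2$, which is positive because $d\phi_s\neq 0$ by the linear independence hypothesis.

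Next I would compute $R(d\phi_s,d\phi_t)d\phi_t=D_sD_t\,d\phi_t-D_tD_s\,d\phi_t$, legitimate since $[\partial_s,\partial_t]=0$ and $D$ is torsion free. Substituting \eqref{levicivitaexplicit} (equivalently \eqref{levicivita} or \eqref{Levi-civita_con_der_second_form}) and differentiating once more in $s$ and in $t$ — using \eqref{star} for $\partial_s\Delta_\phi$ and $\partial_t\Delta_\phi$, the standard variation formulas for $g_\phi^{-1}$ and for $\omega_\phi^n$, and the variation of the Green function $G_\phi$ of \eqref{definvlapl}, the normalization constants $C(s,t)$ dropping out in the difference — the terms containing $\phi_{st}$ and $\phi_{tt}$ must cancel, and what remains contains second order derivatives of $\phi_t$ only through $\partial\overline{\partial}\phi_t$ and $\Delta_\phi\phi_t$, always paired against one another, together with the auxiliary function $f[\phi_s,\phi_t]$ of \eqref{theoryandpractice} and its Green potential. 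Pairing the result with $d\phi_s$ and using the symmetry of the cubic form determined by \eqref{Levi-civita_con_der_second_form}, the mean--zero identity \eqref{meanzero}, the divergence identities \eqref{theoryandpractice} and \lemref{lemmaclosedness}, and repeated integration by parts, the numerator should be brought to the form
\[
\langle R(d\phi_s,d\phi_t)d\phi_t,d\phi_s\rangle_{d\phi}=\int_M Q(\phi_t,\nabla\phi_t)\,\frac{\omega_\phi^n}{n!}+(\text{nonlocal Green terms}),
\]
where $Q$ is, at each point, a quadratic form in $\phi_t$ and its first derivatives whose coefficients are built algebraically out of $g_\phi$, the curvature of $g_\phi$, and $\phi_s$ together with finitely many of its covariant derivatives.

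The estimate is then routine. On the compact manifold $M$ the coefficients of $Q$ are bounded by a constant depending only on $\omega_\phi$ and on $\phi_s$ (through finitely many of its derivatives), so $\bigl|\int_M Q\,\omega_\phi^n/n!\bigr|\le C(\omega_\phi,d\phi_s)\bigl(\|\nabla\phi_t\|_{L^2}^2+\|\phi_t\|_{L^2}^2\bigr)$, and the nonlocal terms are controlled the same way using the $L^2\!\to\!L^2$ continuity of the Green operator of \eqref{definvlapl}. Since $\phi_t\in T_\phi\mathcal{H}$ has zero mean against $\omega_\phi^n/n!$, the Poincar\'e inequality gives $\|\phi_t\|_{L^2}^2\le C(\omega_\phi)\|\nabla\phi_t\|_{L^2}^2=C(\omega_\phi)\|d\phi_t\|_{d\phi}^2$, which yields the displayed bound; dividing by the denominator computed above completes the argument.

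I expect the main obstacle to be the penultimate step: one must verify that, after all the integrations by parts, every genuinely second order contribution in $\phi_t$ cancels — no surviving $\int_M(\Delta_\phi\phi_t)^2$, no $\int_M|\partial\overline{\partial}\phi_t|^2$, and no mixed term $\int_M(\nabla^2\phi_t)(\nabla\phi_t)$ — so that the final bound really involves only $\|d\phi_t\|_{d\phi}=\|\nabla\phi_t\|_{L^2}$ and not a higher Sobolev norm of $\phi_t$. This cancellation is precisely where the special structure of the Dirichlet metric must enter: the tensor $C[\cdot]$ of Definition~\ref{tensorC}, the K\"ahler identity \lemref{lemmaclosedness}, and the Bochner--type vanishing hidden in the combination $H\phi_t*H\phi_t-(\Delta_\phi\phi_t)^2$, which integrates to zero against constants by \eqref{meanzero}. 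Were any such term to survive with a non--degenerate coefficient, testing against a high--frequency $\phi_t$ would already contradict the statement, so this cancellation is both necessary and the heart of the proof; by comparison, the reduction to the orthogonal case and the differentiation under the integral sign (including the variation of the Green function) are harmless bookkeeping.
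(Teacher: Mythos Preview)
Your overall strategy coincides with the paper's: pass to an orthonormal frame, compute the curvature numerator, and bound it by $C(\omega_\phi,\phi_s)\,\|d\phi_t\|_{d\phi}^2$. The gap is exactly the one you flag in your last paragraph: you have not exhibited the cancellation of the second--order contributions in $\phi_t$, and the ``routine'' estimate you sketch in the preceding paragraph does \emph{not} close without it. In particular, the $L^2\!\to\!L^2$ continuity of the Green operator applied to $f[\phi_s,\phi_t]$ only yields $\|f[\phi_s,\phi_t]\|_{L^2}\le C\|\Delta_\phi\phi_s\,\Delta_\phi\phi_t-H\phi_s*H\phi_t\|_{L^2}$, which still carries two derivatives of $\phi_t$; so neither that nor Poincar\'e suffices by itself.

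The paper supplies precisely this missing mechanism, and it does so without ever varying the Green function. Working with the implicit form \eqref{levicivita} and pairing in the order $\int_M(D_sD_t\phi_s-D_tD_s\phi_s)\,\Delta_\phi\phi_t\,\omega_\phi^n/n!$ (so that $\phi_t$ enters only once through $D_t$ and once as the test function), it introduces the potential $a(\sigma,\tau)$ with $\Delta_\phi a(\sigma,\tau)=\Delta_\phi\phi_\sigma\,\Delta_\phi\phi_\tau-H\phi_\sigma*H\phi_\tau$ (your $-f[\phi_\sigma,\phi_\tau]$) and, after a chain of integrations by parts using \lemref{lemmaclosedness}, reaches the closed expression
\[
\int_M C[a(s,t)]^{i\overline{j}}(\phi_s)_i(\phi_t)_{\overline{j}}\,\frac{\omega_\phi^n}{n!}
\;-\;\int_M C[a(s,s)]^{i\overline{j}}(\phi_t)_i(\phi_t)_{\overline{j}}\,\frac{\omega_\phi^n}{n!}.
\]
The second integral is already quadratic in $\nabla\phi_t$ with coefficients depending only on $\phi_s$. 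For the first, the decisive step is the a~priori bound $\|\nabla a(s,t)\|_{L^2}\le K(\phi_s)\,\|\nabla\phi_t\|_{L^2}$, obtained from
\[
\|\nabla a(s,t)\|_{L^2}^2=-\int_M a(s,t)\,\Delta_\phi a(s,t)\,\frac{\omega_\phi^n}{n!}
=\int_M C[\phi_s]^{i\overline{j}}(\phi_t)_i\,(a(s,t))_{\overline{j}}\,\frac{\omega_\phi^n}{n!}
\]
via \lemref{lemmaclosedness} and absorption. It is exactly the divergence--free structure of $C[\phi_s]$ that converts the apparent $H^2$ dependence of $a(s,t)$ on $\phi_t$ into an $H^1$ bound; this is the concrete incarnation of the ``Bochner--type vanishing'' you allude to, and it is what your outline is missing.
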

\smallskip
In what follows we simplify further the notation,
writing $\Delta $ for $\Delta_\phi$;
\begin{defn}
 Given  $d\phi = d\phi (s, t)$ a smooth two-parameter family in $\mathcal{A}$,
we call $A= A(s,t)$ and $B= B(s,t)$ the following real valued smooth functions on $M$.
\begin{align*}
 A(s , t) &: = D_t \phi_s ;\\
B(s , t) &:= \Delta A(s ,t) = 2 \Delta \phi_{st}
+ \Delta \phi_t \Delta \phi_s - \p \bar \p \phi_t * \p \bar \p \phi_s .
\end{align*}

Remember that $d\phi_s \in T_{d\phi}\mathcal{A}$ and
$\phi_s \in T_{\phi} \mathcal{H}$ corresponds via the map $\Phi_*$ of Lemma \ref{lemma:H one to one with A}.
Notice that $A$ and $B$ are symmetric with respect to the real parameters $s$ and $t$.
\end{defn}
\smallskip
\begin{lem}\label{lem1}
Let $d\phi = d\phi (s, t , \s , \tau )$ be a four parameter family of curves
in the space of K\"ahler metrics $\mathcal{A}$.
Then, with the above notation we have
\begin{align}\label{equaone}
\int_M D_\sigma D_\tau \phi_s \Delta \phi_t \frac{\omega_\phi^n }{n!} =\int_M \phi_t
\left[
2 B_\sigma (s , \tau)
+ \Delta \phi_{\sigma} B(s , \tau)
+ \p \bar \p \phi_{\sigma} * \p \bar \p  A(s, \tau)
\right]
\frac{\omega_\phi^n }{n!}. \nonumber \\
\end{align}
\end{lem}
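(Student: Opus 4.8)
The plan is to compute the function $\Delta_\phi\bigl(D_\sigma D_\tau\phi_s\bigr)$ in closed form and then move the Laplacian off it by self-adjointness, pairing against $\phi_t$. First I would record that, with $A = A(s,\tau) = D_\tau\phi_s$ and $B = B(s,\tau) = \Delta A$ as in the definition above, one has $D_\sigma D_\tau\phi_s = D_\sigma A$, the covariant derivative of the section $A$ in the $\sigma$-direction. Applying the implicit formula \eqref{levicivita} for the Levi--Civita connection of the Dirichlet metric with $\psi$ replaced by $A$ and $t$ replaced by $\sigma$ gives
\[
\Delta_\phi D_\sigma A
= \Delta_\phi\frac{\partial A}{\partial\sigma}
+ \Bigl(\Delta_\phi\frac{\partial\phi}{\partial\sigma}\Bigr)\bigl(\Delta_\phi A\bigr)
+ \frac{\partial}{\partial\sigma}\bigl(\Delta_\phi A\bigr)
= \Delta_\phi\frac{\partial A}{\partial\sigma} + \Delta\phi_\sigma\cdot B + B_\sigma .
\]

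Next I would rewrite the first term. Although \eqref{star} is stated only for the Hessian of $\phi$, the same computation --- using $\partial_\sigma g_\phi^{j\overline k} = -\,g_\phi^{j\overline q}g_\phi^{p\overline k}\phi_{\sigma, p\overline q}$ --- shows that for any smooth $\sigma$-dependent real function $A$,
\[
\frac{\partial}{\partial\sigma}\bigl(\Delta_\phi A\bigr) = \Delta_\phi\Bigl(\frac{\partial A}{\partial\sigma}\Bigr) - H\phi_\sigma * HA .
\]
For our $A$ this reads $B_\sigma = \Delta_\phi A_\sigma - H\phi_\sigma * HA$, hence $\Delta_\phi A_\sigma = B_\sigma + H\phi_\sigma * HA$. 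Substituting into the previous display and collecting terms I would obtain
\[
\Delta_\phi\bigl(D_\sigma D_\tau\phi_s\bigr) = 2B_\sigma(s,\tau) + \Delta\phi_\sigma\, B(s,\tau) + H\phi_\sigma * HA(s,\tau),
\]
which is exactly the bracketed quantity on the right-hand side of \eqref{equaone}.

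To conclude I would use that $\Delta_\phi = g_\phi^{i\overline j}\partial_i\partial_{\overline j}$ is self-adjoint with respect to $\omega_\phi^n/n!$ (integrate $i\partial\overline\partial$ by parts twice, using $d\omega_\phi=0$), so that
\[
\int_M \bigl(D_\sigma D_\tau\phi_s\bigr)\,\Delta_\phi\phi_t\,\frac{\omega_\phi^n}{n!}
= \int_M \phi_t\,\Delta_\phi\bigl(D_\sigma D_\tau\phi_s\bigr)\,\frac{\omega_\phi^n}{n!},
\]
and then plug in the identity above. The normalization constant implicit in $D_\sigma D_\tau\phi_s$ (cf. Lemma \ref{oncovariantderivative}) is irrelevant here, since $\int_M \Delta_\phi\phi_t\,\omega_\phi^n/n! = 0$. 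I do not expect a genuine obstacle: everything follows formally from \eqref{levicivita} and \eqref{star}. The one point deserving care is the extension of \eqref{star} from $\phi$ to the auxiliary function $A$ (and, relatedly, checking that $D_\sigma A$ is well posed even though $A$ is only defined up to an additive constant, which is harmless because only $\Delta_\phi$ of it and its integral against a mean-zero density enter); beyond that, the proof is bookkeeping with the four parameters $s,t,\sigma,\tau$.
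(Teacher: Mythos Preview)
Your proposal is correct and follows essentially the same route as the paper: integrate by parts to move $\Delta_\phi$ onto $D_\sigma D_\tau\phi_s$, then apply the covariant-derivative formula \eqref{levicivita} with $\psi=A(s,\tau)$ together with the variation formula \eqref{star} (extended to $A$) to obtain $\Delta_\phi(D_\sigma D_\tau\phi_s)=2B_\sigma+\Delta\phi_\sigma\,B+H\phi_\sigma*HA$. The paper's proof is the same computation, only with the intermediate expression written as $2\partial_\sigma(\Delta D_\tau\phi_s)+\Delta\phi_\sigma\,\Delta D_\tau\phi_s+H\phi_\sigma*HD_\tau\phi_s$ before identifying each term with $B_\sigma$, $B$, and $A$.
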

\begin{proof}
We compute, with an integration by parts
\begin{align} \label{first}
 &\int_M   D_\s D_\t \phi_s  \Delta \phi_t  \frac{\omega_\phi^n}{n!} \nonumber \\
&=
 \int_M \phi_t \cdot \Delta (D_\s D_\t \phi_s )  \frac{\omega_\phi^n}{n!}
\nonumber\\
&= \int_M \phi_t \cdot
\left[
2\frac{\partial}{\partial \s} \Delta D_\tau \phi_s + \Delta \phi_\s \Delta D_\tau \phi_s
+ \p \bar \p \phi_\s * \p \bar \p  D_\tau \phi_s
\right]
\frac{\omega_\phi^n}{n!};  \nonumber \\
\end{align}
the latter expression of \eqref{first} is precisely the right hand side of the claimed
\eqref{equaone}, and thus the lemma is proved.
\end{proof}
\medskip
Before moving further, we fix another piece of notation.
\begin{defn}
 Let $\phi = \phi (s, t): [0,1] \times [0,1] \rightarrow \mathcal{H}$,
a smooth two-parameter family of curves  in $\mathcal{H}$.
Define, for any fixed $s,t \in [0,1]$, $a(s,t) \in C^{\infty}(M ,\, \mathbb{R})$ to be
\begin{align}\label{notationtwo}
 a(s , t) := A(s , t) - 2 \phi_{st} \;  .
\end{align}
\end{defn}
\smallskip
\begin{rmk}\label{remark: one}
For any fixed pair $s,t \in [0,1]$, then the function $a(s , t)$ is actually an
element of $T_{\phi(s,t)}\mathcal{H}$.
Indeed, what we have to check is
\[
 \int_M a(s,t) \frac{\omega_\phi^n}{n!} =0 \; .
\]
On the first addendum of $a(s,t)$, we have that
\[
 \int_M D_s \phi_t \frac{\omega_\phi^n}{n!} =0 \, ,
\]
by the very definition of covariant derivative. Also,
on the second term of $a(s,t)$ we have
\[
 \int_M -2\phi_{st} \frac{\omega_\phi^n}{n!} =
\frac{\partial}{\partial s}\int_M  -2 \phi_t \frac{\omega_\phi^n}{n!}
+2 \int_M  \phi_t \Delta \phi_s \frac{\omega_\phi^n}{n!} \; .
\]
The latter vanishes if and only if $\phi_s$ and $\phi_t$ are orthogonal
and in this case, $a(s,t)$ is an element of $T_{\phi(s,t)}\mathcal{H}$.
\end{rmk}
\begin{lem}
 With the notation as in \eqref{notationtwo},  for a two parameter family of curves
$d\phi = d\phi(s , t)$ in $\mathcal{A}$, we have  the formula
\begin{align}\label{claimedtwo}
 &\int_M \{ D_s D_t \phi_s - D_t D_s \phi_s \}  \Delta \phi_t  \frac{\omega_\phi^n}{n!}= \nonumber \\
&= \int_M \phi_t \{ \p \bar \p \phi_s * \p \bar \p  a( s, t) - \Delta \phi_s \Delta a( s , t )
                - \p \bar \p \phi_t * \p \bar \p  a (s , s) + \Delta \phi_t \Delta a(s , s) \} \frac{\omega_\phi^n}{n!} \; .
\nonumber \\
\end{align}
\end{lem}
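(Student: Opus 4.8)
The plan is to derive \eqref{claimedtwo} from \lemref{lem1} by applying it twice, with the two orderings of the parameters, and then to reorganize the outcome by means of the definition \eqref{notationtwo} of $a$. It is worth keeping in mind that, since $[\partial_s,\partial_t]=0$, the combination $D_sD_t\phi_s - D_tD_s\phi_s$ is the curvature operator $R(\partial_s,\partial_t)\phi_s$, and that pairing it with $\Delta\phi_t$ against $\omega_\phi^n/n!$ is, under the standard identification $\langle\chi,\psi\rangle_\phi = -\int_M\chi\,\Delta\phi_t\,\omega_\phi^n/n!$ for the Dirichlet metric, exactly the numerator entering the sectional curvature; so \eqref{claimedtwo} is the computational heart of \thmref{thmsectionalcurvature}.

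First I would apply \lemref{lem1} with $(\sigma,\tau)=(s,t)$, obtaining $\int_M D_sD_t\phi_s\,\Delta\phi_t\,\frac{\omega_\phi^n}{n!} = \int_M \phi_t\left[\,2B_s(s,t) + \Delta\phi_s\,B(s,t) + H\phi_s * HA(s,t)\,\right]\frac{\omega_\phi^n}{n!}$, and then with $(\sigma,\tau)=(t,s)$, using that $D_s\phi_s = A(s,s)$, obtaining $\int_M D_tD_s\phi_s\,\Delta\phi_t\,\frac{\omega_\phi^n}{n!} = \int_M \phi_t\left[\,2B_t(s,s) + \Delta\phi_t\,B(s,s) + H\phi_t * HA(s,s)\,\right]\frac{\omega_\phi^n}{n!}$. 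Subtracting the two, $\int_M(D_sD_t\phi_s - D_tD_s\phi_s)\Delta\phi_t\,\frac{\omega_\phi^n}{n!}$ becomes the integral of $\phi_t$ against $2[B_s(s,t)-B_t(s,s)] + [\Delta\phi_s B(s,t) - \Delta\phi_t B(s,s)] + [H\phi_s * HA(s,t) - H\phi_t * HA(s,s)]$.

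Next I would insert the decompositions $A(s,\tau)=a(s,\tau)+2\phi_{s\tau}$ and $B(s,\tau)=\Delta a(s,\tau)+2\Delta\phi_{s\tau}$ coming from \eqref{notationtwo} into this bracket. The pieces that are algebraic in $a$ (i.e. not differentiated) already read $\Delta\phi_s\Delta a(s,t)-\Delta\phi_t\Delta a(s,s)+H\phi_s*Ha(s,t)-H\phi_t*Ha(s,s)$, while the remainder consists of the derivative terms $2[B_s(s,t)-B_t(s,s)]$ together with everything built purely out of $\phi_{st}$ and $\phi_{ss}$. It then remains to integrate by parts the terms of the shape $\int_M\phi_t\,\partial_\sigma[\,\cdot\,]\,\omega_\phi^n/n!$: one moves $\partial_\sigma$ off, using $\partial_\sigma(\omega_\phi^n/n!)=\Delta\phi_\sigma\cdot\omega_\phi^n/n!$ and the third-derivative commutation formula \eqref{star}, and then a further integration by parts (self-adjointness of $\Delta$ against $\omega_\phi^n/n!$), the symmetry of $*$, and \lemref{lemmaclosedness} together annihilate all the $\phi_{st},\phi_{ss}$ contributions and reorganize the $\Delta\phi_s\Delta a$ and $\Delta\phi_t\Delta a$ terms into their counterparts in \eqref{claimedtwo}; the surviving expression is exactly $\int_M\phi_t\{H\phi_s*Ha(s,t)-\Delta\phi_s\Delta a(s,t)-H\phi_t*Ha(s,s)+\Delta\phi_t\Delta a(s,s)\}\omega_\phi^n/n!$, which is the claim.

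The main obstacle is purely one of bookkeeping rather than of ideas. First, \lemref{lem1} is stated for a four-parameter family, so one must be careful when the outer parameter $\sigma$ is specialized to one of the inner parameters $s$ or $t$, in order that $B_\sigma$ be read correctly. Second, and more seriously, the cancellation of the $\phi_{st},\phi_{ss}$ terms rests on several integrations by parts in which the $s$- and $t$-dependence hidden inside $g_\phi$, $\Delta$, the pairing $H*H$ and $\omega_\phi^n$ must all be differentiated consistently through \eqref{star} and through $\partial_\sigma(\omega_\phi^n/n!)=\Delta\phi_\sigma\,\omega_\phi^n/n!$; verifying that these contributions assemble into the right-hand side of \eqref{claimedtwo} with the correct signs is where the actual computation lies.
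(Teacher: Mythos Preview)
Your outline is essentially the same strategy the paper uses --- apply \lemref{lem1} twice and subtract --- and it does lead to \eqref{claimedtwo}. However, your description of how the ``remainder'' resolves is inaccurate, and this is not just a matter of bookkeeping.

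Once you have the integrand $2[B_s(s,t)-B_t(s,s)] + [\Delta\phi_s B(s,t)-\Delta\phi_t B(s,s)] + [H\phi_s*HA(s,t)-H\phi_t*HA(s,s)]$, there is no need to ``move $\partial_\sigma$ off'' by product rule, nor any need for self-adjointness of $\Delta$ on $M$, nor for \lemref{lemmaclosedness}. One simply expands $B_\sigma(s,\tau)$ directly using \eqref{star} (and the analogous formula for $\partial_\sigma(H\phi_\tau*H\phi_s)$) and subtracts: all the $\phi_{st},\phi_{ss}$ terms then cancel \emph{pointwise}. This is what the paper does in passing from \eqref{intermediateone} to \eqref{intermediatetwo}--\eqref{intermediatethree}. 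What remains after that cancellation is
\[
H\phi_s*Ha(s,t)-H\phi_t*Ha(s,s)\;+\;\bigl[\Delta\phi_s\,H\phi_t*H\phi_s-\Delta\phi_t\,H\phi_s*H\phi_s\bigr],
\]
and the point you are missing is the elementary algebraic identity
\[
\Delta\phi_s\,H\phi_t*H\phi_s-\Delta\phi_t\,H\phi_s*H\phi_s \;=\; -\Delta\phi_s\,\Delta a(s,t)+\Delta\phi_t\,\Delta a(s,s),
\]
which follows immediately from $\Delta a(\sigma,\tau)=\Delta\phi_\sigma\Delta\phi_\tau-H\phi_\sigma*H\phi_\tau$ (this is the paper's \eqref{intermediatefour}). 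This identity is what flips the signs on the $\Delta a$ terms that you noted have the wrong sign in your ``algebraic part''; it is purely pointwise, not an integration-by-parts phenomenon. So the computation you flagged as ``where the actual work lies'' is in fact a two-line algebraic identity, once the right expansion of $B_\sigma$ has been made.
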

\begin{proof}
We write more explicitly the formula \eqref{equaone}, substituting the definition of $B(\tau)$;
what we get is
\begin{align}\label{intermediateone}
 &\int_M  D_\sigma D_\tau \phi_s \Delta \phi_t \frac{\omega_\phi^n}{n!} = \nonumber \\
&= \int_M \phi_t \{ 2 B_\sigma (s , \tau) + \Delta \phi_\sigma \cdot B( s, \tau)
+ \p \bar \p \phi_\sigma * \p \bar \p A( s, \tau) \}   \frac{\omega_\phi^n}{n!} = \nonumber \\
&= \int_M \phi_t \left\{ 4 \Delta \phi_{s\tau \s} - 4  \p \bar \p \phi_\s * \p \bar \p  \phi_{s\t}
-2 \p \bar \p \phi_\s * \p \bar \p \phi_\t \cdot \Delta \phi_s + 2 \Delta \phi_{\t \s} \Delta \phi_s
\right. \nonumber \\
&-2 \p \bar \p \phi_\s * \p \bar \p \phi_s \cdot \Delta \phi_\t + 2 \Delta \phi_{s\s} \Delta \phi_\t
-2 \p \bar \p  \phi_{\s\t} * \p \bar \p  \phi_s + 2 \p \bar \p \phi_\t * \p \bar \p \phi_{s\s} \nonumber \\
&\left. +2 \Delta \phi_\s \Delta \phi_{s\t} + \Delta \phi_\s \Delta \phi_\t \Delta \phi_s
-\Delta \phi_\s \p \bar \p  \phi_\t * \p \bar \p \phi_s + \p \bar \p  \phi_\s * \p \bar \p A(s , \tau) \right\} \frac{\omega_\phi^n}{n!} \; .
\nonumber \\
\end{align}
Thus, using \eqref{intermediateone}, by the mutual cancellation of the terms
symmetric in $\s$ and $\t$,  we get
\begin{align}\label{intermediatetwo}
 &\int_M \{ D_\sigma D_\tau \phi_s - D_\t D_\s \phi_s\} \Delta \phi_t \frac{\omega_\phi^n}{n!}
= \nonumber \\
&\int_M \phi_t \left\{ \p \bar \p \phi_\s * \p \bar \p (A(s , \t) -2\phi_{s\t}) -
\Delta \phi_\t \p \bar \p \phi_\s * \p \bar \p \phi_s  \right.
\nonumber \\
&\left. -\p \bar \p \phi_\t * \p \bar \p (A(s , \s) -2 \phi_{s \s })+
\Delta \phi_\s \p \bar \p \phi_\t * \p \bar \p \phi_s \right\} \frac{\omega_\phi^n}{n!} \nonumber \\
&= \int_M \phi_t \left\{  \p \bar\p \phi_\s *\p \bar \p  a(s , \t)  -
\Delta \phi_\t \p \bar \p \phi_\s * \p \bar \p \phi_s \right. \nonumber \\
&\left. -\p \bar \p \phi_\t * \p \bar \p a(s, \s)
+ \Delta \phi_\s \p \bar \p \phi_\t * \p \bar \p \phi_s \right\} \frac{\omega_\phi^n}{n!} \nonumber \\
\end{align}
From \eqref{intermediatetwo} we immediately obtain
\begin{align}\label{intermediatethree}
 &\int_M \{ D_s D_t \phi_s - D_t D_s \phi_s\} \Delta \phi_t \frac{\omega_\phi^n}{n!}
= \nonumber \\
&\int_M \phi_t \{  \p \bar \p \phi_s * \p \bar \p  a(s, t)  -
\Delta \phi_t \p \bar \p \phi_s * \p \bar \p \phi_s - \p \bar \p \phi_t * \p \bar \p a(s , s)
+ \Delta \phi_s \p\bar \p \phi_t * \p \bar \p \phi_s \} \frac{\omega_\phi^n}{n!} \; . \nonumber \\
\end{align}
Now, it is worth noticing that
\begin{align}\label{intermediatefour}
 -\Delta \phi_s \p \bar \p \phi_t * \p \bar \p \phi_s + \Delta \phi_t \p \bar \p \phi_s * \p \bar \p \phi_s
= \Delta \phi_s \Delta a(s,t) - \Delta \phi_t \Delta a(s,s) \; ;
\end{align}
the latter claim is obvious by taking the difference of
\[
 \Delta \phi_s \Delta a(s,t) = (\Delta \phi_s )^2 \Delta \phi_t - \Delta \phi_s \p \bar \p \phi_t * \p \bar \p \phi_s
\]
and
\[
 \Delta \phi_t \Delta a(s,s) = \Delta \phi_t (\Delta \phi_s)^2 - \Delta \phi_t \p \bar \p \phi_s * \p \bar \p\phi_s \;  .
\]
Thus, combining \eqref{intermediatethree} and \eqref{intermediatefour} we get
\begin{align*}
 &\int_M \{ D_s D_t \phi_s - D_t D_s \phi_s\} \Delta \phi_t \frac{\omega_\phi^n}{n!}
= \nonumber \\
&= \int_M \phi_t \{  \p \bar \p \phi_s * \p \bar \p  a(s,t)  - \Delta \phi_s \Delta a(s,t)
                      - \p \bar\p \phi_t * \p \bar \p a(s,s)  +  \Delta \phi_t \Delta a(s,s) \}
\frac{\omega_\phi^n}{n!}\, , \nonumber \\
\end{align*}
which is precisely the claimed formula \eqref{claimedtwo}. This concludes the proof of the lemma.
\end{proof}
\begin{rmk}
In the case of a Riemannian surface, for any two functions $\chi$ and $\psi$, we have that
$$
\Delta  \psi \Delta \chi  = \p \bar \p  \psi * \p \bar \p  \chi .
$$
When we insert this fact in \eqref{claimedtwo} we find back
the  conclusion, already got in \cite{Calamai}, that the space of K\"ahler
potentials endowed with the Dirichlet metric is flat.
\end{rmk}
\smallskip
A consequence of formula \eqref{claimedtwo} is the following
nice expression for the numerator of the sectional curvature
of the Dirichlet metric.
\smallskip
\begin{lem}
The following formula holds for a smooth two parameter family of curves
$d\phi = d\phi (s , t)$ in the space of K\"ahler metrics $\mathcal{A}$;
\begin{align}\label{claimedthree}
 &\int_M \{ D_s D_t \phi_s - D_t D_s \phi_s\} \Delta_\phi \phi_t \frac{\omega_\phi^n}{n!}
= \nonumber \\
&= \frac{1}{2}\int_M |d a(s,t)|_{g_\phi}^2\frac{\omega_\phi^n}{n!} - \frac{1}{2}
\int_M (d a(s,s) , d a(t,t))_{g_\phi} \frac{\omega_\phi^n}{n!}\, , \nonumber \\
\end{align}
where we recall that the symmetric expression $a(\s , \t)$ satisfies
\begin{align}\label{a formula}
\Delta_\phi a(\s , \t) = \Delta_\phi \phi_\s \Delta_\phi \phi_\t - \p \bar \p \phi_\s * \p \bar \p \phi_\t .
\end{align}
\end{lem}
\begin{proof}
In the argument here, we are going to write $\Delta, g, \omega$ instead of
$\Delta_\phi, g_\phi, \omega_\phi$.
We first consider this part of the right hand side of \eqref{claimedtwo};
\begin{align*}
& \int_M \left(
\Delta a(s,s) \Delta \phi_t \cdot \phi_t -
	  \p \bar \p a(s,s)*  \p \bar \p \phi_t \cdot \phi_t
	  \right)
	  \frac{\omega^n}{n!} \, ,\nonumber
\end{align*}
which we integrate twice by parts using formula \eqref{contraction two tensor and one tensor},
getting
\begin{align*}
 =& \int_M \left(
 -\frac{1}{2} (d   a(s,s), d \Delta \phi_t )_g \cdot \phi_t
 -\frac{1}{2}(d  a(s,s), d  \phi_t )_g \cdot \Delta \phi_t \right. \\
 &\left. \phantom{++}+\frac{1}{2}(d  a(s,s),\, d \Delta \phi_t )_g  \cdot \phi_t
 +\frac{1}{2}( d  a(s,s), \,  \p \bar \p \phi_t * d \phi_t )_g
	 	  \right)
	  \frac{\omega^n}{n!} \nonumber \\
=& \int_M \left(
 \frac{1}{2} a(s,s)\cdot( d \phi_t ,\, d \Delta \phi_t )_g
 + a(s,s) (\Delta^2 \phi_t) \cdot \phi_t \right. \\
 &\phantom{++}+ \frac{1}{2} a(s,s)(\Delta \phi_t)^2
 +\frac{1}{2}a(s,s)( d \Delta\phi_t, d  \phi_t )_g \\
 &\phantom{++}-a(s,s)(\Delta^2 \phi_t) \cdot \phi_t
 -a(s,s) ( d  \phi_t , d \Delta \phi_t )_g  \\
 &\phantom{++}\left. -\frac{1}{2}a(s,s)( d  \Delta \phi_t ,  d \phi_t )_g
 -a(s,s) \p \bar \p \phi_t * \p \bar \p \phi_t
	 	  \right)
	  \frac{\omega^n}{n!} 	
\end{align*}
and so, after obvious cancellations, we get
\begin{align*}
  =\int_M (
 a(s,s) (\Delta \phi_t)^2
  -a(s,s)\p \bar \p \phi_t * \p \bar \p \phi_t
	 	  )
	  \frac{\omega^n}{n!} \nonumber 	
\end{align*}
and finally, by the very definition of $a(t,t)$, we conclude
\begin{align}\label{parta}
 &=\int_M a(s,s)\Delta a(t,t)\frac{\omega^n}{n!}
       =-\frac{1}{2}\int_M (d a(s,s) , d a(t,t) )_{g} \frac{\omega^n}{n!}. \nonumber \\
\end{align}

Concerning the other part of the right hand side of \eqref{claimedtwo}, we similarly compute
\begin{align*}
& \int_M (-\Delta a(s,t) \Delta \phi_s \cdot \phi_t
	  + \p \bar \p a(s,t)* \p \bar \p \phi_s \cdot \phi_t ) \frac{\omega^n}{n!} \nonumber \\
	=& \int_M \left( \frac{1}{2} ( d a(s,t),\, d \Delta \phi_s )_g \cdot \phi_t
+ \frac{1}{2}( d a(s,t) ,\, d \phi_t )_g \Delta \phi_s \right. \\
&\left. \phantom{++}- \frac{1}{2}( d a(s,t) ,\, d \Delta \phi_s )_g \cdot  \phi_t
 -\frac{1}{2}( d a(s,t) ,\,  \p \bar \p \phi_s * d \phi_t )_g
 \right) \frac{\omega^n}{n!} \nonumber \\
	  \end{align*}
	  and again by parts, we obtain
\begin{align*}
&= \int_M \left( - a(s,t) (\Delta^2 \phi_s) \cdot \phi_t
- \frac{1}{2} a(s,t)  ( d \Delta \phi_s , d  \phi_t )_g
\right. \nonumber \\
&\phantom{++} - a(s,t) \Delta \phi_t \Delta \phi_s -\frac{1}{2} a(s,t) (d \phi_t , d \Delta \phi_s )_g
 \nonumber \\
&\phantom{++} +a(s,t)  (\Delta^2 \phi_s) \cdot \phi_t
	   + \frac{1}{2}a(s,t)(d \Delta \phi_s , d   \phi_t )_\g \nonumber \\
&\phantom{++} \left. +\frac{1}{2}a(s,t) ( d \Delta \phi_s , d \phi_t )_g
 + a(s,t) \p \bar \p \phi_s * \p \bar \p \phi_t \right)
	      \frac{\omega^n}{n!} \nonumber \\
\end{align*}
	    and so we conclude
	      \begin{align}\label{partb}
&=\int_M (-a(s,t)(\Delta \phi_s)^2 +a(s,t) \p \bar \p \phi_{s}* \p \bar \p \phi_{t} )\frac{\omega_\phi^n}{n!}\nonumber \\
&=\int_M -a(s,t)\Delta a(s,t)\frac{\omega_\phi^n}{n!}
       = \frac{1}{2}\int_M (d a(s,t) , d a(s,t))_{g_\phi} \frac{\omega_\phi^n}{n!}. \nonumber \\
\end{align}
Combining \eqref{parta} and \eqref{partb} we get the claimed formula \eqref{claimedthree}.
\end{proof}
\medskip
We are  now going to get another equivalent description of the same quantity
already considered in the previous lemma.
\begin{lem}
We have for a two parameter family of curves $d\phi = d\phi (s ,t)$
in the space of K\"ahler metrics $\mathcal{A}$, the formula
(which makes use of the piece of notation introduced in Definition \ref{tensorC}
and in \eqref{contraction two tensor and one tensor})
\begin{align}\label{intermediatefive}
 &\int_M \{ D_s D_t \phi_s - D_t D_s \phi_s\} \Delta_\phi \phi_t \frac{\omega_\phi^n}{n!}
\nonumber \\
&=\int_M ( C[a( s, t)]* \p \phi_s , \bar \p \phi_t )_{g_\phi} \frac{\omega_\phi^n}{n!}
-\int_M ( C[a( s, t)]* \p \phi_t , \bar \p \phi_t )_{g_\phi} \frac{\omega_\phi^n}{n!}.
\nonumber \\
\end{align}
\end{lem}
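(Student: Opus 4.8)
The plan is to deduce \eqref{intermediatefive} from \eqref{claimedthree} (equivalently from \eqref{claimedtwo}) by recognizing each of the two terms on the right of \eqref{intermediatefive} as an integration by parts of a quantity already computed in the previous lemma. The bridge is the following auxiliary identity, which I would prove first: for all real $f,u,v\in C^{\infty}(M,\mathbb{R})$,
\[
\int_M C[f]^{i\overline{j}}\,u_i\,v_{\overline{j}}\,\frac{\omega_\phi^n}{n!}
\;=\;\int_M f\,\bigl(Hu * Hv-\Delta u\,\Delta v\bigr)\,\frac{\omega_\phi^n}{n!},
\]
where $Hu*Hv:=g_\phi^{j\overline{k}}g_\phi^{p\overline{q}}u_{j\overline{q}}v_{p\overline{k}}$. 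This is established by the same manipulation as in \eqref{parta}--\eqref{partb}: working in normal coordinates, integrate by parts once to move the derivative of $v$ onto $C[f]^{i\overline{j}}u_i$ (the divergence of $C[f]^{i\overline{j}}$ being killed by Lemma \ref{lemmaclosedness}), then integrate by parts again to transfer onto $u$ and $v$ the two derivatives contained in $C[f]$; the resulting third-order terms in $u$ and $v$ cancel because partial derivatives commute, and one is left exactly with $f$ paired against $Hu*Hv-\Delta u\,\Delta v$. (In complex dimension one $C[f]\equiv 0$, so both sides vanish, consistently with the flatness recorded in \cite{Calamai}.)

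I would then substitute the defining relation $\Delta a(\sigma,\tau)=\Delta\phi_\sigma\,\Delta\phi_\tau-H\phi_\sigma*H\phi_\tau$. Applying the auxiliary identity with $(f,u,v)=(a(s,t),\phi_s,\phi_t)$ gives
\[
\int_M C[a(s,t)]^{i\overline{j}}(\phi_s)_i(\phi_t)_{\overline{j}}\,\frac{\omega_\phi^n}{n!}
=-\int_M a(s,t)\,\Delta a(s,t)\,\frac{\omega_\phi^n}{n!},
\]
and with $(f,u,v)=(a(s,s),\phi_t,\phi_t)$ gives
\[
\int_M C[a(s,s)]^{i\overline{j}}(\phi_t)_i(\phi_t)_{\overline{j}}\,\frac{\omega_\phi^n}{n!}
=-\int_M a(s,s)\,\Delta a(t,t)\,\frac{\omega_\phi^n}{n!}.
\]
On the other hand, integrating by parts in \eqref{claimedthree} turns its right-hand side into $-\int_M a(s,t)\Delta a(s,t)\frac{\omega_\phi^n}{n!}+\int_M a(s,s)\Delta a(t,t)\frac{\omega_\phi^n}{n!}$, so the left-hand side of \eqref{intermediatefive} equals this expression as well. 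Subtracting the two displayed identities and comparing then proves \eqref{intermediatefive}.

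The only non-formal step is the auxiliary identity, which is really the content of \eqref{parta}--\eqref{partb} repackaged; the essential input is Lemma \ref{lemmaclosedness}, the Kähler fact that $C[f]$ is divergence-free, which is what allows the two integrations by parts to land on $u$ and $v$ with no boundary or error terms. The point requiring care is the bookkeeping: one must verify that every higher-order derivative of $u$ and $v$ (equivalently, every curvature contribution) drops out, leaving the purely second-order combination $Hu*Hv-\Delta u\,\Delta v$. This is precisely the mechanism behind \eqref{claimedthree}, where the right-hand side already depends on $\phi_s,\phi_t$ only through the functions $a(\sigma,\tau)$.
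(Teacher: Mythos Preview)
Your argument is correct, but it takes a longer path than the paper's. The paper works directly from \eqref{claimedtwo}: its right-hand side is recognized at once as
\[
\int_M \phi_t\bigl\{-C[a(s,t)]^{i\overline{j}}(\phi_s)_{i\overline{j}}+C[a(s,s)]^{i\overline{j}}(\phi_t)_{i\overline{j}}\bigr\}\frac{\omega_\phi^n}{n!},
\]
since $\Delta f\,\Delta g - Hf*Hg = C[f]^{i\overline{j}}g_{i\overline{j}}$ by Definition~\ref{tensorC}; then a \emph{single} integration by parts in $\overline{j}$, with Lemma~\ref{lemmaclosedness} killing the divergence of $C[\cdot]$, yields \eqref{intermediatefive} immediately.

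Your route instead passes through \eqref{claimedthree} (itself obtained from \eqref{claimedtwo} by two integrations by parts) and then invokes the auxiliary identity $\int_M C[f]^{i\overline{j}}u_i v_{\overline{j}}=\int_M f\,(Hu*Hv-\Delta u\,\Delta v)$ to come back. That identity is true and useful---a clean way to see it is to first integrate by parts to $-\int_M v\,C[f]^{i\overline{j}}u_{i\overline{j}}$, note the symmetry $C[f]^{i\overline{j}}u_{i\overline{j}}=C[u]^{i\overline{j}}f_{i\overline{j}}$, and then integrate by parts twice using the divergence-freeness of $C[u]$---but in the present context it amounts to undoing the passage \eqref{claimedtwo}$\to$\eqref{claimedthree} and then performing the paper's single step. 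So both proofs rest on the same ingredient (Lemma~\ref{lemmaclosedness}); the paper's is the shortcut, while yours isolates a general lemma that is handy elsewhere (it is essentially what reappears as \eqref{eq_intermediate_lem_first_add_sect_curv} in the next lemma).
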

\begin{proof}
Plug in the formula \eqref{claimedtwo} the definition of $C[a(\cdot , \cdot)]$,
and then integrate by parts. The equation \eqref{intermediatefive} will be gotten after
applying Lemma \ref{lemmaclosedness}.
\end{proof}
\bigskip
In what follows now, our aim is to further simplify the formula \eqref{intermediatefive}
without loss of generality.
\begin{lem}\label{lemma_pages18_2and18_3}
The following formula holds
\begin{align}\label{equa_first_addendum_of_sect_curv}
 \left|
\int_M (C[a( s, t)]* \p \phi_s , \bar \p \phi_t )_{g_{\phi}} \frac{\omega_\phi^n}{n!}
\right|
\leq
K_1 (\phi_s)  \int_M |\p \phi_t |_{g_{\phi}}^2 \frac{\omega_\phi^n}{n!},
\end{align}
where $K_1$ is a positive constant, depending on the tensor $C[\phi_s]$.
\end{lem}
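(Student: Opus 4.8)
The plan is to recognise the left-hand side of \eqref{equa_first_addendum_of_sect_curv} as one half of the squared $L^2(g_\phi)$-norm of the exact $1$-form $da(s,t)$, to identify $da(s,t)$ with the Hodge projection $\pi_d\beta$ of a $1$-form $\beta$ which at each point of $M$ is a $g_\phi$-linear function of $\nabla\phi_t$ whose coefficients involve only the tensor $C[\phi_s]$, and then to conclude by the fact that an orthogonal projection does not increase norms.

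First I would integrate by parts in the index $\overline j$. Since $M$ is closed there is no boundary term, and by Lemma~\ref{lemmaclosedness} the contribution in which the derivative falls on $C[a(s,t)]^{i\overline{j}}$ vanishes, so that
\begin{align*}
\int_M C[a(s,t)]^{i\overline{j}}(\phi_s)_i(\phi_t)_{\overline{j}}\,\frac{\omega_\phi^n}{n!}
&= -\int_M \phi_t\, C[a(s,t)]^{i\overline{j}}(\phi_s)_{i\overline{j}}\,\frac{\omega_\phi^n}{n!}\\
&= \int_M \phi_t\bigl(H\phi_s * Ha(s,t) - \Delta\phi_s\,\Delta a(s,t)\bigr)\frac{\omega_\phi^n}{n!}.
\end{align*}
The last integrand is exactly the one treated by the integration by parts carried out in \eqref{partb}, which turns it into $-\int_M a(s,t)\,\Delta a(s,t)\,\frac{\omega_\phi^n}{n!}=\frac{1}{2}\int_M (da(s,t),da(s,t))_{g_\phi}\,\frac{\omega_\phi^n}{n!}$. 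Hence it remains to bound this last quantity by $K_1(\phi_s)\int_M|\nabla\phi_t|_{g_\phi}^2\,\frac{\omega_\phi^n}{n!}$.

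Next, set $\beta:=\frac{1}{2} C[\phi_s]_{i\overline{j}}\bigl(\phi_t^i\,dz^{\overline{j}}+\phi_t^{\overline{j}}\,dz^i\bigr)$, a real $1$-form. By \eqref{meanzero} applied to the pair $(\phi_s,\phi_t)$ the function $a(s,t)$ has zero mean, so the equation $\Delta f=-a(s,t)$ is solvable, and \eqref{theoryandpractice} (with $(\phi_s,\phi_t)$ in place of $(\phi_t,\psi)$) then reads $da(s,t)=-\pi_d\beta$. Since $\pi_d$ is orthogonal with respect to the Dirichlet metric on $1$-forms, $\int_M (da(s,t),da(s,t))_{g_\phi}\,\frac{\omega_\phi^n}{n!}=\|\pi_d\beta\|^2\le\|\beta\|^2=\int_M(\beta,\beta)_{g_\phi}\,\frac{\omega_\phi^n}{n!}$. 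Finally, at each point of $M$ the form $\beta$ is the image of $\nabla\phi_t$ under the $g_\phi$-self-adjoint endomorphism attached to the Hermitian tensor $C[\phi_s]$, so one has a pointwise inequality $(\beta,\beta)_{g_\phi}\le K_1\,|\nabla\phi_t|_{g_\phi}^2$ with $K_1$ a fixed numerical multiple of $\sup_M$ of the squared $g_\phi$-operator norm of $C[\phi_s]$, a constant depending only on the tensor $C[\phi_s]$ (and on the fixed background metric $\omega_\phi$). Integrating this pointwise bound and chaining the estimates gives \eqref{equa_first_addendum_of_sect_curv}.

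The one step carrying real content is the identity $da(s,t)=-\pi_d\beta$, that is, the appeal to \eqref{theoryandpractice}: the opening integration by parts, the reuse of \eqref{partb}, and the closing pointwise linear-algebra estimate are all routine. I expect this to be the crux precisely because it is what lets us discard, before taking any norm, all the derivatives of $\phi_s$ and of $\phi_t$ buried inside $a(s,t)=\Delta\phi_s\Delta\phi_t-H\phi_s * H\phi_t$ — which is the reason the final constant can be taken to depend on $C[\phi_s]$ alone.
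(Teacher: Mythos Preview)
Your argument is correct and in fact tidier than the paper's. Both proofs open the same way (integrate by parts against the divergence-free tensor $C[\,\cdot\,]$), and both ultimately reduce to the bound $\|da(s,t)\|^2\le K(\phi_s)\,\|\nabla\phi_t\|^2$, but the paper reaches that bound differently: it first invokes the algebraic symmetry $C[a(s,t)]^{i\overline j}(\phi_s)_{i\overline j}=C[\phi_s]^{i\overline j}a(s,t)_{i\overline j}$ to obtain $|\text{LHS}|\le K_3(C[\phi_s])\,\|\nabla a(s,t)\|\,\|\nabla\phi_t\|$, and then, as a separate step, bounds $\|\nabla a(s,t)\|^2$ by writing $\int a\,\Delta a=\int C[\phi_s]^{i\overline j}(\phi_t)_i a_{\overline j}$ and running an $\epsilon$-Cauchy--Schwarz absorption. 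You bypass both steps at once by recognising, via \eqref{partb}, that the left-hand side \emph{equals} $\tfrac12\|da(s,t)\|^2$, and then using \eqref{theoryandpractice} to identify $da(s,t)=-\pi_d\beta$ with $\beta$ pointwise linear in $\nabla\phi_t$ through $C[\phi_s]$; the orthogonal-projection inequality $\|\pi_d\beta\|\le\|\beta\|$ replaces the paper's absorption. Your route leans more heavily on the Section~3 machinery and yields a cleaner chain of inequalities; the paper's route is more hands-on and independent of the Hodge-projection description of the connection.

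One small wording fix: what \eqref{meanzero} gives is that $\Delta a(s,t)=\Delta\phi_s\Delta\phi_t-H\phi_s*H\phi_t$ has zero mean (not $a(s,t)$ itself, which need not unless $\phi_s,\phi_t$ are Dirichlet-orthogonal), and the equation solved in \eqref{theoryandpractice} is $\Delta f=-\Delta a(s,t)$, not $\Delta f=-a(s,t)$. This is only a slip in the justification; the identity $da(s,t)=-\pi_d\beta$ and the rest of your proof are unaffected.
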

\begin{proof}
 We notice that, using integration by parts and Lemma \ref{lemmaclosedness},
we have
\begin{align}
\int_M (C[a( s, t)] * \p \phi_s , \bar\p \phi_t )_{g_\phi} \frac{\omega_\phi^n}{n!}
=
-\int_M (C[a( s, t)] * \p \bar \p  \phi_s) \cdot  \phi_t \frac{\omega_\phi^n}{n!}.
\end{align}
Next, we remark that
\begin{align} \label{eq_intermediate_lem_first_add_sect_curv}
 C[a( s, t)] * \p \bar \p  \phi_s
=
 C[\phi_s] * \p \bar \p a( s, t) ;
\end{align}
indeed, by definition
\[
  C[a( s, t)] * \p \bar \p \phi_s
=
 \Delta a(s,t) \Delta \phi_s - \p \bar \p a(s,t) * \p \bar \p\phi_s,
\]
and by its symmetric structure we obtain \eqref{eq_intermediate_lem_first_add_sect_curv}.
Also, we have
\begin{align}\label{eqn_step_one_fisrt_add_sect_curv}
 \left|
\int_M (C[\phi_s]* \p\bar \p a( s, t)) \cdot \phi_t \frac{\omega_\phi^n}{n!}.
\right|
\leq K_3 ( C[\phi_s] ) \|\p a(s,t) \| \|\p \phi_t \| \; .
\end{align}
Then,
\begin{align*}
&\left| \int_M a(s,t) \Delta_\phi a(s,t) \frac{\omega_\phi^n}{n!} \right|
=
\left|
\int_M (C[\phi_s]* \p\phi_{t}, \bar \p a(s,t) )_{g_\phi}  \frac{\omega_\phi^n}{n!}
\right| \nonumber \\
&\leq
\epsilon  \int_M |\p a(s,t)|_{g_\phi}^2 \frac{\omega_\phi^n}{n!}
+ K_4 (\epsilon , C[\phi_s])  \int_M |\p \phi_t|_{g_{\phi}}^2
\frac{\omega_\phi^n}{n!} \; .
\end{align*}
Thus, choosing $\epsilon = \frac{1}{2}$ we have
\begin{align}\label{eqn_step_two_fisrt_add_sect_curv}
 \int_M |\p a(s,t)|_{g_\phi}^2 \frac{\omega_\phi^n}{n!}  \leq
K_5(\phi_s)  \int_M |\p \phi_t|_{g_{\phi}}^2 \frac{\omega_\phi^n}{n!} \; .
\end{align}
Thus, the combination of equations \eqref{eqn_step_one_fisrt_add_sect_curv}
and \eqref{eqn_step_two_fisrt_add_sect_curv} gives the claimed formula
\eqref{equa_first_addendum_of_sect_curv}, whence the proof is complete.
\end{proof}
\smallskip
\begin{lem}\label{lemma_second_add_sect_curv}
 The following formula, concerning the second term appearing on the right hand side
of \eqref{intermediatefive}, holds
\begin{align}\label{eqn_second_add_sect_curv}
 \left|
\int_M ( C[a(s ,s )] * \p \phi_t , \bar \p \phi_t )_{g_\phi} \frac{\omega_\phi^n}{n!}
\right|
\leq
K_2 ( \phi_s ) \int_M |\p \phi_t|_{g_{\phi}}^2 \frac{\omega_\phi^n}{n!} \; .
\end{align}
\end{lem}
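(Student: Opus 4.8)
The plan is to exploit a feature that distinguishes this term from the mixed term handled in Lemma~\ref{lemma_pages18_2and18_3}: the tensor $C[a(s,s)]$ carries \emph{no} dependence on the section $\phi_t$. Indeed, by \eqref{notationtwo} the diagonal value $a(s,s) = (\Delta_\phi\phi_s)^2 - H\phi_s * H\phi_s$ is a fixed smooth real function on $M$ built only out of $\phi_s$ and the base metric $\omega_\phi$, so by Definition~\ref{tensorC} the tensor $C[a(s,s)]_{i\overline{j}} = \Delta_\phi a(s,s)\cdot g_{i\overline{j}} - a(s,s)_{,i\overline{j}}$ is a fixed smooth Hermitian (trace-free) tensor on $M$, again depending only on $\omega_\phi$ and $\phi_s$ (through at most four derivatives of $\phi_s$).

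The first step is then a purely pointwise estimate: at each $p\in M$ the quantity $C[a(s,s)]^{i\overline{j}}(\phi_t)_i(\phi_t)_{\overline{j}}$ is a Hermitian bilinear form evaluated on the $(1,0)$-part of $\nabla\phi_t$, so the Cauchy--Schwarz inequality — equivalently, bounding by the $g_\phi$-operator norm of $C[a(s,s)]$ at $p$ — gives
\[
\left| C[a(s,s)]^{i\overline{j}}(\phi_t)_i(\phi_t)_{\overline{j}}\right|(p)
\;\le\; \| C[a(s,s)]\|_{C^0(M,g_\phi)}\,|\nabla\phi_t|^2_{g_\phi}(p).
\]
Since $M$ is compact and $C[a(s,s)]$ is smooth, the norm $\|C[a(s,s)]\|_{C^0(M,g_\phi)}$ is finite, and we simply set $K_2(\phi_s):=\|C[a(s,s)]\|_{C^0(M,g_\phi)}$, which manifestly depends only on $\omega_\phi$ and $\phi_s$. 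Integrating the displayed inequality against $\omega_\phi^n/n!$ over $M$ then yields \eqref{eqn_second_add_sect_curv} at once.

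The point worth stressing — and the only place requiring any care, so in a weak sense the ``hard part'' — is that no absorption argument of the kind used in Lemma~\ref{lemma_pages18_2and18_3} is needed here: the sole occurrence of $\phi_t$ is already in the quadratic factor $(\phi_t)_i(\phi_t)_{\overline{j}}$, which is precisely the shape appearing on the right-hand side of \eqref{eqn_second_add_sect_curv}, so there is nothing to move across. Thus the estimate reduces to the bookkeeping observation that $a(s,s)$ is the ``diagonal'' value of the symmetric expression $a(\cdot,\cdot)$ and hence $\phi_t$-independent, together with compactness of $M$. (If one prefers, an integration by parts using Lemma~\ref{lemmaclosedness} first rewrites the left-hand side of \eqref{eqn_second_add_sect_curv} as $\int_M C[a(s,s)]^{i\overline{j}}(\phi_t)_{i\overline{j}}\,\phi_t\,\frac{\omega_\phi^n}{n!}$, which can be estimated in the same way, but this detour is not necessary.)
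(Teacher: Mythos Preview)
Your argument is correct and is essentially the paper's own proof spelled out in full: the paper simply asserts the pointwise bound $|C[a(s,s)]^{i\overline{j}}(\phi_t)_i(\phi_t)_{\overline{j}}|\le K_6(C[a(s,s)])\,|\nabla\phi_t|^2_{g_\phi}$ and then observes that $a(s,s)$ depends only on $\phi_s$, which is exactly your operator-norm-plus-compactness reasoning. One cosmetic slip: by \eqref{notationtwo} it is $\Delta_\phi a(s,s)$, not $a(s,s)$ itself, that equals $(\Delta_\phi\phi_s)^2 - H\phi_s*H\phi_s$, but this does not affect the argument since either way $a(s,s)$ is determined by $\omega_\phi$ and $\phi_s$ alone.
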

\begin{proof}
We immediately estimate
\[
 \left|
\int_M (C[a(s ,s )]* \p \phi_t , \bar \p \phi_t )_{g_\phi} \frac{\omega_\phi^n}{n!}
\right|
\leq
K_6 (C[a(s,s]) \int_M |\p \phi_t|_{g_{\phi}}^2 \frac{\omega_\phi^n}{n!}.
\]
 Then, as $a(s,s)$ only depends on $\phi_s$, we get that the constant $K_6 (C[a(s,s])$
is equal to some $K_2 (\phi_s)$ and the proof is achieved.
\end{proof}
We are now in position to prove the main result of the section \par
\smallskip
\noindent {\bf Proof of Theorem \ref{thmsectionalcurvature}.}
As we want to compute the sectional curvature
of a two plane, we have the freedom to replace the two sections $d\phi_s$
and $d\phi_t$ by two other sections which span the same plane and which are orthonormal.
The latter is achieved simply by employing the Gram-Schmidt method.
We still call the two orthogonal  sections $d\phi_s $ and $d\phi_t$.
Thus, the sectional curvature of the plane spanned by $d\phi_s$ and $d\phi_t$ is
\[
 K_D (d\phi_s , d\phi_t) =
\int_M \{ D_s D_t \phi_s - D_t D_s \phi_s\} \Delta \phi_t \frac{\omega_\phi^n}{n!}
\]
Formula \eqref{intermediatefive}, combined with
formulas \eqref{equa_first_addendum_of_sect_curv} and \eqref{eqn_second_add_sect_curv}, gives
\begin{align}\label{intermediateseven}
 &\left|\int_M \{ D_s D_t \phi_s - D_t D_s \phi_s\} \Delta \phi_t \frac{\omega_\phi^n}{n!}
 \right| \nonumber \\
&\leq
\left(K_1 (\phi_s) + K_2 (\phi_s)\right)
\cdot
\int_M |\p \phi_t|_{g_{\phi}}^2 \frac{\omega_\phi^n}{n!} = K(\phi_s), \nonumber \\
\end{align}
where we used the orthonormality assumption. This completes the proof. $\qquad \square$ \par
\medskip
\begin{rmk}\label{cur1}
In view of formula \ref{claimedthree}, we notice that whenever at some point $\phi \in \mathcal{H}$
there are two $\mathbb{R}$-linearly independent tangent vectors $\phi_s , \phi_t$ such that
$a(s,s)$ or $a(t,t)$ is constant, then $K_D (\phi_s , \phi_t )_\phi $ is bigger or equal to zero.
On the opposite side, whenever $a(s,t)$ is constant, then $K_D (\phi_s , \phi_t )_\phi $ is less or equal to zero.
This fact suggests that the Dirichlet metric has curvature whose sign changes.
If this conjecture were confirmed, the consequence would be that the Dirichlet metric behaves
differently both from the Mabuchi metric, whose sectional curvature is non positive,
and from the Calabi metric, whose sectional curvature is positive.
\end{rmk}
\smallskip
\begin{rmk}\label{rmk: complex hessian equation}
We notice here that the formula \eqref{notationtwo} resembles but is slightly different
from the complex Hessian equation; a difference is
that $\phi_s$ and $\phi_t$ are not convex as in the environment of that equation.
As pointed out in the previous remark, solving the modified complex Hessian equations
$a(s,s)=0$ , $a(s,t) = 0$ or $a(t,t)=0$ (see \eqref{a formula})
would detect the sign of the sectional curvature of the Dirichlet metric.
This gives a motivation to approach the study of this new kind of partial differential equations.
\end{rmk}
\bigskip
\subsection{Geodesic equation}
Geodesics in the Mabuchi metric environment enter as main characters the conjectures of Donaldson's program
\cite{Don}; moreover, both for the Mabuchi and the Calabi metric, they are the key tools to show that
the space $\mathcal{H}$ is endowed with the structure of a metric space. In fact, in both cases it turns out that
they are length minimizing.
Let us begin with two definitions which are by now classical, also in this infinite dimensional
environment.
\begin{defn}
 Let $d\phi = d\phi (t)$ be a smooth path in $\mathcal{A}$, with $t\in [a,b]$.
We define the energy of $d\phi$ as
\[
 Engy (d\phi) = \int_a^b \left\{ \int_M (d\phi_t , d\phi_t )_{g_\phi} \frac{\omega_\phi^n}{n!} \right\} dt .
\]
Also, we define the length of the same arc $d\phi (t)$ as
\[
 Lgth (d\phi) = \int_a^b
\left\{
\int_M (d\phi_t , d\phi_t )_{g_\phi} \frac{\omega_\phi^n}{n!}
\right\}^{1/2} dt .
\]
\end{defn}
\medskip
\begin{rmk}
 As well as in the finite dimensional Riemannian geometry theory, there still holds that geodesic arcs
can be defined equivalently both as extremal points of the Energy functional and as solutions
of the equation $D_t d\phi_t =0$.
The argument goes precisely in the same vein, that is by  analyzing the first
variation formula (cf. \cite{Mab}).
\end{rmk}
\medskip
\begin{defn}
A bijective map $\lambda : \mathcal{A} \rightarrow \mathcal{A}$
is called an isometry of $\mathcal{A}$ if, for every smooth path $d\phi (t)$  in $\mathcal{A}$,
then its image $\lambda \circ d\phi (t)$ is still a smooth path, and moreover
\[
 Lgth (\lambda \circ d\phi) = Lgth (d\phi).
\]
\end{defn}
\medskip
\begin{prop}
 Let $h \in Aut^0 (M)$ be any holomorphic automorphism of $M$ in the connected
component of the identity. Then, the mapping $h^* : \mathcal{A} \rightarrow \mathcal{A}$,
such that $d\phi \mapsto h^* (d\phi)$ is an isometry of $\mathcal{A}$ endowed with the Dirichlet metric.
\end{prop}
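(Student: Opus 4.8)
The plan is to show that pulling back by a holomorphic automorphism $h \in \Aut^0(M)$ preserves the Dirichlet length functional. First I would unwind what $h^*$ does on $\mathcal{A}$: since $h$ is biholomorphic it acts on K\"ahler potentials by $\phi \mapsto h^*\phi$, but one must be careful, because $h^*\omega$ need not equal $\omega$. However, $h$ lies in the identity component $\Aut^0(M)$, so $h^*\omega$ and $\omega$ lie in the same K\"ahler class, hence $h^*\omega = \omega + i\partial\bar\partial\rho_h$ for some smooth real $\rho_h$; thus $h^*$ maps $\omega_\phi = \omega + i\partial\bar\partial\phi$ to $h^*\omega_\phi = \omega + i\partial\bar\partial(\rho_h + h^*\phi)$, which is again a K\"ahler metric in $\mathcal{H}$, and on the level of $\mathcal{A}$ the map sends $d\phi$ to $d(\rho_h + h^*\phi)$. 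One checks this is a well-defined bijection of $\mathcal{A}$ (its inverse being induced by $h^{-1}$), and that it carries smooth paths to smooth paths, since $h$ is a fixed smooth map.

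Next I would verify the length-preserving property by a pointwise/fibrewise computation. Given a smooth path $d\phi(t)$ in $\mathcal{A}$, write $\tilde\phi(t) := \rho_h + h^*\phi(t)$, so $\tilde\phi_t = h^*\phi_t$ and $\omega_{\tilde\phi(t)} = h^*\omega_{\phi(t)}$. The key point is naturality of the Dirichlet integrand under the biholomorphism $h$: the Riemannian metric $g_{\tilde\phi}$ on $1$-forms is exactly the pullback under $h$ of $g_\phi$, and the volume form $\omega_{\tilde\phi}^n/n! = h^*(\omega_\phi^n/n!)$. Hence
\begin{align*}
\int_M (d\tilde\phi_t, d\tilde\phi_t)_{g_{\tilde\phi}} \frac{\omega_{\tilde\phi}^n}{n!}
= \int_M h^*\!\left( (d\phi_t, d\phi_t)_{g_\phi} \frac{\omega_\phi^n}{n!} \right)
= \int_M (d\phi_t, d\phi_t)_{g_\phi} \frac{\omega_\phi^n}{n!},
\end{align*}
where the last equality is the change of variables formula for the diffeomorphism $h$ of the compact manifold $M$ (which has degree one since $h \in \Aut^0(M)$). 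Integrating the square root over $t \in [a,b]$ gives $Lgth(h^*\circ d\phi) = Lgth(d\phi)$, which is the definition of an isometry.

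A couple of technical points deserve care, and the main obstacle is really just the bookkeeping around the potential $\rho_h$ rather than any deep difficulty. One must confirm that $\rho_h$ can be chosen once and for all (not depending on $t$), which is fine because $h$ is fixed; and one should note that the additive ambient constant in $\rho_h$ is irrelevant on $\mathcal{A}$ since only $d\rho_h$ enters, so the map on $\mathcal{A}$ is canonically defined. One should also observe that the argument uses nothing about $h$ being in the identity component beyond the fact that $h^*\omega \in [\omega]$; in fact for a general automorphism fixing the class $[\omega]$ the same proof works, and $h \in \Aut^0(M)$ is a convenient sufficient condition guaranteeing this. Finally, it is worth remarking that the same computation shows $h^*$ is an isometry in the strong sense (it preserves the Dirichlet metric tensor itself, not merely the length functional), which one could record as a corollary; but for the statement as given, the length identity above suffices.
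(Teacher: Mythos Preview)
Your proof is correct and follows essentially the same route as the paper: introduce a potential $\rho_h$ (the paper's $\chi$) with $h^*\omega=\omega+i\partial\bar\partial\rho_h$, set $\tilde\phi=\rho_h+h^*\phi$ (the paper's $\xi$), observe $\tilde\phi_t=h^*\phi_t$ and $\omega_{\tilde\phi}=h^*\omega_\phi$, and then use naturality of the Dirichlet integrand together with the change of variables formula. Your write-up is in fact a bit more careful than the paper's about well-definedness, bijectivity, and the precise role of the hypothesis $h\in\Aut^0(M)$.
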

\smallskip
\begin{proof}
 Consider a smooth arc $d\phi = d\phi (t)$ in $\mathcal{A}$, with $t\in [a,b]$.
Denote by $d\chi$ the element of $\mathcal{A}$ such that
$h^* \omega = \omega + i\partial \overline{\partial}\chi $.
Define $d\xi := d\chi + h^* d\phi $.
Then,
\[
 \omega + i\partial \overline{\partial} \xi
= \omega + i \partial (d\chi + h^*d\phi) = h^* (\omega + i\partial \overline{\partial}\phi) >0,
\]
which shows that $d\xi$ belongs to $\mathcal{A}$.
Also, when we consider the time derivative, there holds $d\xi_t = h^*d\phi_t $; whence
\begin{align*}
 Lgth (h^*d\phi)
& =
\int_a^b
\left\{
\int_M (d\xi_t , d\xi_t )_{g_\xi} \frac{\omega_\xi^n}{n!}
\right\}^{1/2} dt \\
&=
\int_a^b
\left\{
\int_M ( h^* d\phi_t , h^* d\phi_t )_{g_\phi}
h^*\left(
\frac{\omega_{\phi}^n}{n!}
\right)
\right\}^{1/2} dt
= Lgth (d\phi),
\end{align*}
which completes the proof of the proposition.
\end{proof}
\smallskip
\medskip
We are now going to prove that another result in \cite{Mab} still
holds in our setting, that is for the Dirichlet metric.
\smallskip
\begin{defn}
 Let $X$ be an holomorphic vector field on $M$; $X\in H^0 (M , \mathcal{O} (TM)) $.
We denote by $X_{\mathbb{R}}$ the real vector field defined by $X_\mathbb{R} := X + \overline{X}$.
Also, we denote by $L_{X_{\mathbb{R}}}$ the Lie derivative of $\omega$ with respect to $X_\mathbb{R}$.
\end{defn}
\smallskip
\begin{prop}
Let $X$ be an holomorphic vector field on $M$ such that $L_{X_{\mathbb{R}}}(\omega)=0$.
Let be given a smooth path $d\phi = d\phi(t)$ in $\mathcal{A}$ such that $L_{X_{\mathbb{R}}}(d\phi)=0$
for all $t$. Then
\[
 L_{X_{\mathbb{R}}}(D_t d\psi_t) = D_t (L_{X_{\mathbb{R}}} d\psi),
\]
for all the smooth sections $d\psi$ along the path $d\phi$.
\end{prop}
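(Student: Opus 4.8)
The plan is to deduce everything from the explicit formula \eqref{Levi-civita_con_der_second_form} for the covariant derivative, together with the fact that, under the two standing hypotheses, the real flow of $X_{\mathbb{R}}$ acts on $\mathcal{A}$ by isometries and fixes the given path $d\phi$.

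First I would record the geometric content of the hypotheses. Since $X$ is holomorphic, its real flow $\rho_\sigma$ consists of biholomorphisms of $M$; since $L_{X_{\mathbb{R}}}(\omega)=0$, each $\rho_\sigma$ fixes $\omega$, hence is an isometry of the Kähler metric $g$ and lies in $\mathrm{Aut}^0(M)$. The hypothesis $L_{X_{\mathbb{R}}}(d\phi(t))=0$ for all $t$ integrates: from $\frac{d}{d\sigma}\rho_\sigma^*(d\phi(t))=\rho_\sigma^*\bigl(L_{X_{\mathbb{R}}}(d\phi(t))\bigr)=0$ and $\rho_0=\mathrm{id}$ one gets $\rho_\sigma^*(d\phi(t))=d\phi(t)$, so $\phi(t)\circ\rho_\sigma$ differs from $\phi(t)$ by a constant and thus $\rho_\sigma^*\omega_{\phi(t)}=\omega_{\phi(t)}$. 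Consequently each $\rho_\sigma$ is an isometry of $g_{\phi(t)}$; it preserves the volume form $\omega_\phi^n/n!$, commutes with $\Delta_\phi$, with the $g_\phi$-Hodge decomposition of real $1$-forms and hence with the projection $\pi_d$ of \eqref{defnprojections}, and it fixes the tensor $C[\,\cdot\,]$ of Definition \ref{tensorC}, since that tensor is assembled functorially from $g_\phi$, $\Delta_\phi$ and covariant differentiation.

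Next, because $\rho_\sigma^*\omega=\omega$, the induced map $\rho_\sigma^*\colon\mathcal{A}\to\mathcal{A}$ is plain pullback of forms (the correction term $d\chi$ appearing in the proof of the Proposition above — that $h^*$ is an isometry of $\mathcal{A}$ for $h\in\mathrm{Aut}^0(M)$ — vanishes here), and that same proof shows $\rho_\sigma^*$ is an isometry of $(\mathcal{A},Di)$. An isometry commutes with the Levi-Civita covariant derivative; applied along the path $d\phi$, which $\rho_\sigma^*$ fixes, and to an arbitrary smooth section $d\psi$ along it, this gives
\[
\rho_\sigma^*\bigl(D_t\,d\psi\bigr)=D_t\bigl(\rho_\sigma^*\,d\psi\bigr)\qquad\text{for all }\sigma.
\]
Differentiating this identity at $\sigma=0$, using $\frac{d}{d\sigma}\big|_{\sigma=0}\rho_\sigma^*=L_{X_{\mathbb{R}}}$ and the fact that $D_t$ is $\mathbb{R}$-linear and continuous in its section argument (so that $\frac{d}{d\sigma}$ passes inside $D_t$), produces exactly $L_{X_{\mathbb{R}}}(D_t d\psi)=D_t(L_{X_{\mathbb{R}}}d\psi)$. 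Equivalently, and avoiding the infinite-dimensional language, one may insert into the two summands of \eqref{Levi-civita_con_der_second_form} the facts that $L_{X_{\mathbb{R}}}$ commutes with $d$ and with $\partial/\partial t$ trivially, with the index raisings and contractions because $L_{X_{\mathbb{R}}}g_\phi=0$, and with $\pi_d$ because the $g_\phi$-Hodge decomposition is preserved, while it annihilates $C[\phi_t]$; this reproduces the same conclusion.

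The step I expect to be the main obstacle is the interchange of the $\sigma$-differentiation with the operator $D_t$ — equivalently, the term-by-term commutations in the direct argument — because $D_t$ involves the nonlocal projection $\pi_d$, or the Green operator $\Delta_u^{-1}$ in the form \eqref{levicivitaexplicit}. One must check that these nonlocal operators depend smoothly enough on the path and are genuinely $\rho_\sigma$-equivariant; once the $\rho_\sigma$-invariance of $g_{\phi(t)}$, hence of $\Delta_\phi$, of $\omega_\phi^n/n!$ and of $G_\phi$, is in hand, this equivariance is immediate and the rest is formal, exactly as in the corresponding statement for the Mabuchi metric in \cite{Mab}.
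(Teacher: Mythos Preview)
Your proposal is correct. Your second, ``direct'' argument---that $L_{X_{\mathbb{R}}}$ commutes with $d$, $\partial/\partial t$, the $g_\phi$-contractions and $\pi_d$, and annihilates $C[\phi_t]$---is exactly the paper's proof: there the key computation is precisely $L_{X_{\mathbb{R}}}C[\phi_t]=0$, deduced from $C[\phi_t]=-\partial\overline{\partial}\phi_t+\Delta_\phi\phi_t\,\omega$ and the hypotheses $L_{X_{\mathbb{R}}}\omega=0$, $X_{\mathbb{R}}\phi=\mathrm{const}$. Your primary route, integrating to the flow $\rho_\sigma$ and invoking that an isometry fixing the base path commutes with the Levi-Civita derivative, is a more conceptual packaging; it is cleaner in spirit and explains \emph{why} the commutation holds, but in this infinite-dimensional setting the statement ``isometries commute with $D_t$'' is not free and, as you yourself note, must be justified by checking $\rho_\sigma$-equivariance of each ingredient in \eqref{Levi-civita_con_der_second_form}---which is precisely the direct computation again. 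So the two approaches converge at the same verification; the flow viewpoint buys geometric clarity, the paper's term-by-term check buys self-containment.
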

\smallskip
\begin{proof}
 Recall that formula \eqref{Levi-civita_con_der_second_form} gives the following
 expression of the Levi-Civita covariant derivative
\[
 D_t d\psi = d \psi_t + \frac{1}{2} \pi_d \left( C[\phi_t] * d\psi \right) \; .
\]
So, for $\frac{\partial}{\partial t}$ and $\pi_d$ commute with $D_t$, it remains to check that
\[
 L_{X_{\mathbb{R}}} C[\phi_t] *\p \psi =  C [\phi_t] * (  \partial (X_{\mathbb{R}} \psi)).
\]
We claim that $L_{X_{\mathbb{R}}} C[\phi_t]=0$; once achieved the claim,
 the above formula follows clearly. Thus, we compute, at a point $p$ of a local
 coordinate chart $(U , \, z^1 , \, \cdots , \, z^n )$,
\begin{align*}
 &L_{X_{\mathbb{R}}} C[\phi_t] =
- L_{X_{\mathbb{R}}} \partial\overline{\partial} \phi_t
+ L_{X_{\mathbb{R}}} (\Delta_\phi \phi_t) \omega\\
&= -\frac{\partial}{\partial t } \partial\overline{\partial} ({\mathbb{R}} \phi)
+ L_{X_{\mathbb{R}}} \left( \frac{\partial}{\partial t} (\Delta_\phi \phi_t
+ \phi_t^{i\overline{j}} \phi_{ti\overline{j}})\right)  \omega\\
&= \frac{\partial}{\partial t } (X_{\mathbb{R}}(\Delta \phi_t) +
 \frac{1}{2}L_{X_{\mathbb{R}}}(\phi_t^{i\overline{j}} \phi_{ti\overline{j}}) ) \omega
=0,
\end{align*}
where we used repeatedly the assumptions $X_{\mathbb{R}}\phi =0$ and $L_{X_{\mathbb{R}}} (\omega)=0$.
Note that the argument for  proving that
$L_{X_{\mathbb{R}}} C[\phi_t] *\p \psi =  C [\phi_t] * (  \partial (X_{\mathbb{R}} \psi))$
goes exactly the same way. Whence, the proof of the proposition is complete.
\end{proof}
\smallskip
Let us also recall other classical definitions, rephrased for the space $\mathcal{A}$,
in view of the next proposition.
\begin{defn}
 Let $d\phi = d\phi (t)$ be a smooth path in $\mathcal{A}$, with $t\in [0,1]$.
We denote by $S=S(\phi)$ the scalar curvature of the metric $\omega_\phi$.
It is a very well known fact that the scalar
$\underline{S}:= \frac{1}{Vol}\int_M S(\phi) \frac{\omega_\phi^n}{n!}$
is a constant that only depends on the K\"ahler class  $[\omega_\phi]$.
We define $f:= f(\phi) $ as the zero-mean function satisfying $\Delta_\phi f = S - \underline{S}$.
Finally, the $K$-energy ( or Mabuchi energy) is defined as (compare \cite[Definition (3.1)]{Mabuchi on K energy})
\[
 \nu (\phi ) := \frac{1}{2} \int_0^1 \int_M ( d\phi_t , df  )_{g_\phi} \frac{\omega_\phi^n}{n!} dt.
\]
\end{defn}
\smallskip
\begin{prop}\label{prop: convexity at a cscK metric}
 Let $d\phi = d\phi (t)$ be a geodesic for the Dirichlet metric, with $t\in [0,1]$.
Then, the K-energy is convex at a point $d\phi ( t_0 )$ which corresponds to a constant
scalar curvature K\"ahler metric.
\end{prop}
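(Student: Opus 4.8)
The plan is to differentiate the function $F(t):=\nu(d\phi(t))$ twice and to exploit that $d\phi(t_0)$ is a \emph{critical point} of $\nu$, so that $F''(t_0)$ collapses onto the classical Hessian of the Mabuchi energy at a constant scalar curvature K\"ahler metric, which is non-negative. First I would record the first variation: since the $1$-form $d\psi\mapsto\tfrac12\int_M(d\psi,df)_{g_\phi}\tfrac{\omega_\phi^n}{n!}$ on $\mathcal A$ is closed (classical; this is what makes $\nu$ a well-defined function on $\mathcal A$), one has $F'(t)=\tfrac12\langle d\phi_t,df\rangle_{d\phi}$, with $f=f(d\phi(t))$ the zero-mean solution of $\Delta_\phi f=S-\as$. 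At $t=t_0$ one has $S\equiv\as$, hence $\Delta_\phi f=0$, and since $M$ is compact and connected $f\equiv0$ there. This single fact is what makes everything below simplify.

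Next I would differentiate once more, using the metric compatibility of the Levi--Civita connection of the Dirichlet metric:
\[
F''(t)=\tfrac12\langle D_t d\phi_t,df\rangle_{d\phi}+\tfrac12\langle d\phi_t,D_t\,df\rangle_{d\phi}.
\]
The first term vanishes at $t_0$ because $df=0$ there (along a geodesic it vanishes identically, via $D_td\phi_t=0$; but the geodesic hypothesis is not actually needed for the statement). For the second term, \eqref{Levi-civita_con_der_second_form} applied with $\psi$ replaced by $f$ gives $D_t\,df=df_t+\tfrac12\pi_d\big(C[\phi_t]_{i\overline{j}}(f^i dz^{\overline{j}}+f^{\overline{j}}dz^i)\big)$, whose last summand also vanishes at $t_0$ since it carries the factors $f^i,f^{\overline{j}}$. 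Hence, integrating by parts,
\[
F''(t_0)=\tfrac12\langle d\phi_t,df_t\rangle_{d\phi}=-\int_M\phi_t\,\Delta_\phi f_t\,\frac{\omega_\phi^n}{n!}.
\]
Differentiating the defining relation $\Delta_\phi f=S-\as$ in $t$, using that $\as$ is constant on $\mathcal A$ (recalled in the preceding Definition) and that $f\equiv0$ at $t_0$ (so the terms in which $\partial_t\Delta_\phi$ hits $f$ drop out), yields $\Delta_\phi f_t=\dot S$ at $t_0$, where $\dot S$ is the linearisation of the scalar curvature in the direction $\phi_t$. Therefore $F''(t_0)=-\int_M\phi_t\,\dot S\,\tfrac{\omega_\phi^n}{n!}$.

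Finally I would invoke the classical second-variation identity for the Mabuchi energy. The linearisation of the scalar curvature is $\dot S=-\mathcal D_\phi^{*}\mathcal D_\phi\phi_t+(\nabla S,\nabla\phi_t)_{g_\phi}$, where $\mathcal D_\phi\psi:=\overline{\partial}(\nabla^{1,0}_\phi\psi)$ is the Lichnerowicz operator and $\mathcal D_\phi^{*}\mathcal D_\phi$ its self-adjoint, non-negative Laplacian. At $t_0$ the metric has $\nabla S=0$, so the gradient term drops and
\[
F''(t_0)=\int_M\phi_t\,\mathcal D_\phi^{*}\mathcal D_\phi\phi_t\,\frac{\omega_\phi^n}{n!}=\int_M\big|\mathcal D_\phi\phi_t\big|_{g_\phi}^{2}\,\frac{\omega_\phi^n}{n!}\ \ge 0,
\]
which is the asserted convexity. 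The only step carrying real analytic content is the derivation of the linearisation formula for $\dot S$ — a computation on K\"ahler manifolds using the commutation of covariant derivatives and the Bianchi identity; granting it, the argument is the bookkeeping above, whose every simplification at $t_0$ stems from the vanishing of the potential $f$ of a constant scalar curvature metric. I expect this last classical input, rather than anything specific to the Dirichlet structure, to be the only nontrivial ingredient.
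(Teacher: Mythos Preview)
Your argument is correct. The overall architecture matches the paper's --- differentiate $\nu$ twice via metric compatibility of the Dirichlet connection, kill the first summand, and reduce the second to a non-negative quadratic form --- but the execution differs in two respects worth noting.

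First, you specialize to $t_0$ immediately, using $f\equiv 0$ there to drop the $C[\phi_t]$-term in $D_t\,df$ before any integration by parts, and then invoke the classical linearisation $\dot S=-\mathcal D_\phi^{*}\mathcal D_\phi\phi_t$ at a cscK metric as a black box. The paper instead keeps $t$ general and computes $I=\langle d\phi_t,D_t\,df\rangle_{d\phi}$ by hand --- expanding $\Delta_\phi f_t$ via Bianchi, integrating by parts, and applying the Ricci identity --- to obtain the explicit formula
\[
I=\frac12\int_M\Big\{2|D^2\phi_t|^2+\big[f_{i\overline{j}}+(S-\underline{S})g_{i\overline{j}}\big]\phi_t^{\,i}\phi_t^{\,\overline{j}}\Big\}\frac{\omega_\phi^n}{n!},
\]
valid along the entire geodesic, and only then sets $S=\underline{S}$, $f=0$. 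So the paper is more self-contained and yields a formula with visible ``error terms'' away from cscK points, while your route is shorter but outsources the Lichnerowicz computation.

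Second, your observation that the geodesic hypothesis is inessential is correct and sharpens the statement: since $df=0$ at $t_0$, the term $\langle D_t d\phi_t,df\rangle_{d\phi}$ vanishes there for \emph{any} smooth path, and what remains is precisely the classical non-negativity of the Hessian of the K-energy at a critical point. The paper uses $D_t d\phi_t=0$ to kill this term globally, but for the stated conclusion your remark shows the Dirichlet geodesic structure is not actually needed.
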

\begin{proof}
 By means of the metric compatibility of the Levi-Civita covariant derivative, we compute
\[
 \frac{\partial^2}{\partial t^2} \nu (\phi) = \frac{1}{2} \int_M
\left[
(D_t d\phi_t , df )_{g_\phi}
+ (d\phi_t , D_t df)_{g_\phi}
\right]
\frac{\omega_\phi^n}{n!} .
\]
The first addendum vanishes since, by assumption on $d\phi$, there holds $D_t d\phi_t =0$.
So we reduced the question to the study of
\begin{align*}
I := \int_M  (d\phi_t , D_t df)_{g_\phi} \frac{\omega_\phi^n}{n!}.
\end{align*}
We preliminarily compute, using the very definition of $D_t$,
\begin{align}\label{equ_term_covxity_k_energy}
 &I= \int_M (d\phi_t , d \frac{\partial f}{\partial t}
+ i_{Re X}C[\phi_t] )_{g_{\phi}}\frac{\omega_\phi^n}{n!} \nonumber \\
&= \int_M
\left[
-\phi_t \Delta_\phi \frac{\partial f}{\partial t}
+ \frac{1}{4} (C[\phi_t]* d\phi_t , df)_{g_\phi}
\right]
\frac{\omega_\phi^n}{n!} \nonumber \\
&=
\int_M
\left[
-\phi_t \Delta_\phi \frac{\partial f}{\partial t}
+ \frac{1}{2}(\p \bar \p \phi_t  * \p \bar \p  \phi_t - (\Delta_\phi \phi_t)^2 ) f
\right]
\frac{\omega_\phi^n}{n!}.
\end{align}
Our next goal is to analyze the addendum
$\int_M -\phi_t \Delta_\phi \frac{\partial f}{\partial t} \frac{\omega_\phi^n}{n!}$.
First, notice that, using the notation suggested in Definition \ref{defn:asterisque},
we can write the scalar curvature as
$S= \omega_\phi * \rho_{\phi}$, where $\rho_{\phi}$ is the Ricci form corresponding to $\omega_0$.
Time-differentiating formula $S= \Delta_\phi f + \underline{S}$, we get
$\Delta_\phi f_t = S_t + \p \bar \p \phi_t * \p \bar \p  f$.
Also, time-differentiating $S= \omega_\phi * \rho_{\phi}$,
we get $S_t = -\Delta_\phi^2 \phi_t - \p \bar \p \phi_t * \rho_{\phi}$.
Inserting the latter expression into the former one, we end up with
\[
 \Delta_\phi f_t = -\Delta_\phi^2 \phi_t - \p \bar \p \phi_t * (\rho_{\phi}- \p\bar \p  f).
\]
Thus, we can rewrite the term we are presently concerned about as
\begin{align} \label{equ_first_add_of_I}
& \int_M -\phi_t \Delta_\phi f_t \frac{\omega_\phi^n}{n!} \nonumber \\
&= \int_M -\phi_t \{ -\Delta_\phi^2 \phi_t
+ \phi_t *
(\p \bar\p f - \rho_{\phi}) \} \frac{\omega_\phi^n}{n!} \nonumber \\
&= \int_M
\{
(\Delta_\phi \phi_t )^2 - ((\rho_{\phi} - \p \bar \p  f)* \p \phi_t , \, \bar \p \phi_t )_{g_\phi}
\}
\frac{\omega_\phi^n}{n!},
\end{align}
where at the last equality we used the fact that the tensor  $\rho_{\phi} - \p \bar \p  f$
is divergence free; this can be seen, using the second Bianchi identity, in local coordinate chart
$(U, z^1 , \, \cdots , \, z^n)$; namely,
$(\rho_{i\overline{j}} - f_{i\overline{j}} )^i = S_{\overline{j}} - S_{\overline{j}} = 0$.

Now, our aim is to expand the second term of \eqref{equ_term_covxity_k_energy}. We compute
\begin{align} \label{equ_second_add_of_I}
 &\frac{1}{2}\int_M
(\p \bar \p \phi_t * \p \bar \p  \phi_{t} - (\Delta_\phi \phi_t )^2 )f
\frac{\omega_\phi^n}{n!}
=
\frac{1}{2}\int_M
[\p \bar \p \phi_t * \p \bar \p f - \Delta_\phi \phi_t \Delta_\phi f ] \phi_t
\frac{\omega_\phi^n}{n!} \nonumber \\
&=
\frac{1}{2}\int_M
(C[f] * \p  \phi_t , \, \bar \p \phi_t )_{g}
\frac{\omega_\phi^n}{n!},
\end{align}
where the argument behind the first equality is exactly the same as in formula \eqref{parta}.
We plug \eqref{equ_first_add_of_I} and \eqref{equ_second_add_of_I}
in \eqref{equ_term_covxity_k_energy} and we get
\begin{align*}
 &I = \int_M
[
(\Delta_\phi \phi_t)^2 - ( (\rho_{\phi} - \p \bar \p f) *\p  \phi_t , \, \bar \p \phi_t)_{g_\phi}
] \frac{\omega_\phi^n}{n!}\\
&\phantom{+++}+\frac{1}{2}\int_M [
-(\p \bar \p f *\p  \phi_t , \, \bar \p \phi_t )_{g_\phi}
+ (S - \underline{S}) (\p \phi_t , \,  \bar \p \phi_t )_{g_\phi}
]
\frac{\omega_\phi^n}{n!}\\
&=\int_M
[
(\Delta_\phi \phi_t)^2 - ( \rho_{\phi} *\p  \phi_t , \, \bar \p \phi_t )_{g_\phi}
] \frac{\omega_\phi^n}{n!}\\
&\phantom{+++}+
\frac{1}{2}\int_M
[
(\p \bar \p f *\p  \phi_t , \, \bar \p  \phi_t )_{g_\phi}
+ (S - \underline{S}) (\p \phi_t , \,  \bar \p \phi_t )_{g_\phi}
]
\frac{\omega_\phi^n}{n!}\\
&=
\frac{1}{2}\int_M \{ 2|D \phi_t|_{g_\phi}^2 +
([
\p \bar \p f + (S - \underline{S})g_{\phi}
]
*\p \phi_t ,\, \bar \p \phi_t)_{g_\phi}
 \} \frac{\omega_\phi^n}{n!},
\end{align*}
where at the last equality we employed the Ricci identity and we wrote $D \phi_t$
which stands, in a coordinate chart,
for $g^{\a \bar \lambda}\frac{\p^2 \phi_t}{\p z^{\bar \lambda} \p z^{\bar \b}} \frac{\p}{\p z^\a} \otimes dz^{\bar \b}$
(see \cite[page 100, c)]{Calabi2}).
In particular, the above formula shows that, if $d\phi (t_0)$ induces
a constant scalar curvature K\"ahler metric, that is $S(\phi(t_0)) = \underline{S}$
and $f( \phi (t_0)) = 0 $, then
\[
 \frac{\partial^2}{\partial t^2}\nu(\phi) (t_0) \geq 0,
\]
which completes the proof of the proposition.
 \end{proof}
\smallskip
\begin{rmk}
It is pointed out by Xiu Xiong Chen and the second author  (see \cite{ChenZheng}, Remark $3.2$) that the Pseudo-Calabi flow
is the gradient flow of the K-energy for the Dirichlet metric.
Here, we want to notice that also the (normalized) K\"ahler Ricci flow is the gradient flow of the
K-energy in the first Chern class for the Dirichlet metric.
Indeed, the first variation of the K energy is, just from the very definition,
\[
 \frac{\partial \nu }{\partial t} (d\phi) = \int_M ( d\phi_t , df  )_{g_\phi} \frac{\omega_\phi^n}{n!}.
\]
Moreover the normalized K\"ahler Ricci flow at the potentials level is
\[
 \frac{\partial}{\partial t} \phi = \log \left( \frac{\omega_\phi^n}{\omega^n} \right) +\phi + h_\omega + c(t).
\]
Then, taking $\partial \overline{\partial}$ on both sides and using the Maximum Principle,
we get the claimed conclusion.
\end{rmk}
\medskip
\section{The family of weighted metrics}\label{section the family of weighted metrics}
Here we introduce a family of Riemannian structures, whose distinguished element is the
Calabi metric.

\begin{defn}
 We consider the following family of Riemannian structures on the space $\mathcal{C}$,
 which we label as \emph{weighted metrics};
 let be given  $F\in \mathcal{C}$ and $G\in T_F\mathcal{C}$, and let
 $\chi \in C^{\infty}(]0,\, +\infty[,\, ]0,\, +\infty[)$. Then we define
 \begin{align}\label{eqn:defined scalar product}
  <G_1 ,\, G_2 >_{F;\, \chi} : = \int_M \chi(F) G_1 G_2  \frac{\omega^n}{n!} \;.
 \end{align}
We call the function $\chi$ the \emph{weight} of the Riemannian product defined above.
Any such Riemannian product is smooth
(a role here is played by the smoothness of the weight); moreover, any such Riemannian
product is non-degenerate (since the weight is a positive function).
\end{defn}

\begin{rmk}\label{rem:scalar product at level of kahler metrics}
The space of K\"ahler metrics  $\mathcal{H}$
together with the following Riemannian product
is isometric to the space $\mathcal{C}$ with
the Riemannian product \eqref{eqn:defined scalar product};
given $\omega_\phi \in \mathcal{H}$,
 and $\psi \in T_{\omega_\phi}\mathcal{H}$, that product is
 given by
 \begin{align*}
  <\psi_1 , \psi_2 >_{\omega_\phi ; \chi} = \int_M
  \chi \left(\frac{\omega_\phi^n}{\omega^n} \right )
  \left( \Delta_\phi \psi_1 \right) \cdot \left( \Delta_\phi \psi_2 \right)
  \left( \frac{\omega_\phi^n}{\omega^n}\right)^2
  \frac{\omega^n}{n!} \; .
 \end{align*}
\end{rmk}

\begin{rmk}
 A special sub-family of Riemannian products is got by specifying the weight to be of
 the form $\chi (x) = x^k$, for some $k\in \mathbb{Z}$. In this case, we are considering
 the metric, for a fixed $k\in \mathbb{Z}$,
\begin{align*}
 <G_1 , G_2 >_F := \int_M G_1 G_2 F^k \omega^n \; .
\end{align*}
The Calabi metric is retrieved  for $k=-1$: that is the weight that gives the Calabi
metric is $\chi (x) = x^{-1}$. According to Remark
\ref{rem:scalar product at level of kahler metrics},
we also retrieve its expression at the level of the space of K\"ahler metrics, which is
\begin{align*}
 <\psi_1 , \psi_2 >_{\omega_\phi ; x^{-1}} =  \int_M \left( \Delta_\phi \psi_1 \right) \cdot
 \left( \Delta_\phi \psi_2 \right)
 \frac{\omega_\phi^n}{n!}\; ,
\end{align*}
(compare with the expression in \cite{Calamai}).
\end{rmk}

The following sections concerns the computation of formulas and equations
that come from the definition of weighted metrics \eqref{eqn:defined scalar product}.

\subsection{The Levi-Civita connection}

We have the existence of the Levi-Civita covariant derivative for
any of the Riemannian structures defined by \eqref{eqn:defined scalar product}.
For the precise definition of the Levi-Civita covariant derivative, we refer to
\cite[Definition 7]{Calamai}

\begin{prop}\label{prop:levicivita existence and uniqueness}
The Levi-Civita covariant derivative for the Riemannian structure
\eqref{eqn:defined scalar product} exists and is unique.
 Let $F=F(t)$ be a smooth path in $\mathcal{C}$ and let $G=G(t)$
 a vector field on the path; then the Levi-Civita covariant derivative of
 $G$ along $F$ is given by
 \begin{align}\label{eqn: formula of the covariant derivative}
  D^{\chi}_t G = \frac{dG}{dt}
  + \frac{\chi' (F)}{2\chi(F)}\frac{dF}{dt} G
  -\frac{\chi^{-1} (F)}{\int_M \chi^{-1} (F)\frac{\omega^n}{n!}}
  \int_M \frac{\chi' (F)}{2\chi(F)}\frac{dF}{dt} G \frac{\omega^n}{n!}\; ,
 \end{align}
 where the inverse of $\chi$ is understood with respect
 to the multiplicative structure, i.e. $\chi^{-1}= \frac{1}{\chi}$.
\end{prop}
\begin{proof}
 The additivity, the homogeneity and the Leibniz rule
 of \eqref{eqn: formula of the covariant derivative}
 are evident. Concerning the compatibility with the
 metric, we compute
 \begin{align*}
  \frac{d}{dt} <G ,G >_{F; \chi}&= \frac{d}{dt}\int_M \chi(F)G^2 \frac{\omega^n}{n!}\\
  &= 2\int_M G \chi(F) \left(
  \frac{dG}{dt} + \frac{\chi'(F)}{2\chi(F)}\frac{dF}{dt} G
                       \right) \frac{\omega^n}{n!}\; .
 \end{align*}
The trick now is that we can add, inside the round brackets, a term of the form
$\lambda \chi^{-1}(F)$, where $\lambda$ is any real constant. This will not change the
value of the integral, since we have the constraint $\int_M G \frac{\omega^n}{n!}$
on tangent vectors. Thus, we can write
\begin{align*}
  \frac{d}{dt} <G ,G >_{F; \chi} = 2\int_M G \chi(F) \left(
  \frac{dG}{dt} + \frac{\chi'(F)}{2\chi(F)}\frac{dF}{dt} G
  +\lambda \chi^{-1}(F)
                       \right) \frac{\omega^n}{n!}\; .
 \end{align*}
Now, the constant $\lambda$ is chosen in order to have a tangent vector insider the
round brackets; namely, we want the following constraint to be fulfilled
\begin{align*}
 \int_M \left(
  \frac{dG}{dt} + \frac{\chi'(F)}{2\chi(F)}\frac{dF}{dt} G
  +\lambda \chi^{-1}(F)
                       \right) \frac{\omega^n}{n!} = 0\; .
\end{align*}
The above equality holds if and only if $\lambda$ is
\begin{align*}
 \lambda =  -\frac{1}{\int_M \chi^{-1} (F)\frac{\omega^n}{n!}}
  \int_M \frac{\chi' (F)}{2\chi(F)}\frac{dF}{dt} G \frac{\omega^n}{n!}\; ,
\end{align*}
according to the statement. Finally, to verify the property on torsion free,
we let $F=F(s,t)$ be a two parameter family of smooth curves,
then we take  $G=\frac{dF}{ds}$, and we notice that the equation
\eqref{eqn: formula of the covariant derivative} is symmetric in $s$ and $t$.
This completes the proof of the proposition.
\end{proof}
\smallskip
\subsection{The Sectional Curvature}
We prepare the main result of this section via some preliminary lemmas.
In this section, let $F=F(s,t)$ denote a two-parameter family of smooth curves on $\mathcal{C}$.
Also, let $G=G(s,t), H=H(s,t)\, \in T_{F(s,t)}\mathcal{C}$ be smooth curves of tangent vectors.
\begin{lem}\label{lem: formula for the Riemann curvature tensor}
 The following formula holds
 \begin{align*}
  &D^{\chi}_s D^{\chi}_t G - D^{\chi}_t D^{\chi}_s G \\
  &=\frac{1}{4\int_M \chi^{-1} \frac{\omega^n}{n!}} \frac{\chi'(F)}{\chi^2(F)}F_s
  \int_M \frac{\chi ' (F)}{\chi(F)} F_t G \frac{\omega^n}{n}
  -
  \frac{1}{4\int_M \chi^{-1} \frac{\omega^n}{n!}} \frac{\chi'(F)}{\chi^2(F)}F_t
  \int_M \frac{\chi ' (F)}{\chi(F)} F_s G \frac{\omega^n}{n}\\
  &+
  \frac{\chi^{-1}(F)}{2}\int_M \frac{\chi'(F)}{\chi^2(F)}F_s \frac{\omega^n}{n!}
  \int_M \frac{\chi'(F)}{\chi(F)}F_t G \frac{\omega^n}{n!}
  -
   \frac{\chi^{-1}(F)}{2}\int_M \frac{\chi'(F)}{\chi^2(F)}F_t \frac{\omega^n}{n!}
  \int_M \frac{\chi'(F)}{\chi(F)}F_s G \frac{\omega^n}{n!}\\
  &+
  \frac{\chi^{-1}(F)}{4}\left(\int_M \chi^{-1}(F)\frac{\omega^n}{n!}\right)^2
  \int_M \frac{\chi'(F)}{\chi^2(F)}F_s \frac{\omega^n}{n!}
  \int_M \frac{\chi'(F)}{\chi(F)}F_t G \frac{\omega^n}{n!}\\
  &-
  \frac{\chi^{-1}(F)}{4}\left(\int_M \chi^{-1}(F)\frac{\omega^n}{n!}\right)^2
  \int_M \frac{\chi'(F)}{\chi^2(F)}F_t \frac{\omega^n}{n!}
  \int_M \frac{\chi'(F)}{\chi(F)}F_s G \frac{\omega^n}{n!}\; .
   \end{align*}
\end{lem}
\begin{proof}
 The claimed formula is lengthy computation which just makes use of
 \eqref{eqn: formula of the covariant derivative}.
\end{proof}

\begin{rmk}
 In the case of the Calabi metric, the last four addenda vanish.
 In fact, when $\chi(x) = x^{-1}, \, x>0$,  then the ratio $\frac{\chi '(F)}{\chi^2 (F)} = -1$,
 and so the integral
 \begin{align*}
  \int_M \frac{\chi'(F)}{\chi^2(F)}F_{\bullet} \frac{\omega^n}{n!}\; ,
 \end{align*}
 appearing in all of the four addenda of the formula, vanishes since any tangent vector
 $F_{\bullet}$ has to have mean zero by \eqref{eqn:tangent vectors}.
\end{rmk}

\begin{lem}\label{lemma: preparation 2 for sectional curvature weighted}
 The following formula holds
 \begin{align*}
  &<D^{\chi}_s D^{\chi}_t G- D^{\chi}_t D^{\chi}_s G, H >_{F ; \chi} \\
  &=\frac{1}{4\int_M \chi^{-1} \frac{\omega^n}{n!}}
  \int_M \frac{\chi'(F)}{\chi(F)}F_s H \frac{\omega^n}{n}
  \int_M \frac{\chi ' (F)}{\chi(F)} F_t G \frac{\omega^n}{n}\\
  &-
  \frac{1}{4\int_M \chi^{-1} \frac{\omega^n}{n!}}
  \int_M \frac{\chi'(F)}{\chi(F)}F_t H \frac{\omega^n}{n}
  \int_M \frac{\chi ' (F)}{\chi(F)} F_s G \frac{\omega^n}{n}\; .
 \end{align*}
\end{lem}
\begin{proof}
 The formula follows from a  straightforward computation that builds on
 the symmetries given by the Lemma \ref{lem: formula for the Riemann curvature tensor}.
\end{proof}
We are now ready for the main result of this section
\begin{thm}\label{thm: formula of sectional curvature}
 The sectional curvature of a two plane spanned by $F_s$ and $F_t$
 has the following expression
 \begin{align}\label{eqn:formula sectional curvature}
  &K^{\chi}[F](F_s , F_t ) \\
 &= \frac{1}{4\int_M \chi^{-1} \frac{\omega^n}{n!}}
  \frac{\left(\int_M \frac{\chi'(F)}{\chi(F)}(F_s)^2 \frac{\omega^n}{n!}\right)
  \left(\int_M \frac{\chi'(F)}{\chi(F)} (F_t)^2 \frac{\omega^n}{n!}\right)
  -\left(\int_M  \frac{\chi'(F)}{\chi(F)} F_s F_t \frac{\omega^n}{n!} \right)^2}{\left(\int_M \chi(F) (F_s)^2 \frac{\omega^n}{n!}\right)
  \left(\int_M \chi(F) (F_t)^2 \frac{\omega^n}{n!}\right)
  -\left(\int_M  \chi(F) F_s F_t \frac{\omega^n}{n!} \right)^2}  \nonumber\; .
 \end{align}
 \end{thm}
 \begin{proof}
 It follows directly from Lemma \ref{lemma: preparation 2 for sectional curvature weighted}.
 \end{proof}

 \begin{cor}
 For every weight function $\chi$,
 then the space $\mathcal{C}$ always has non-negative sectional curvature.
 If $\chi$ is constant, then the right-hand side of
 \eqref{eqn:formula sectional curvature} is clearly zero.
\end{cor}

\begin{rmk}
 It can be immediately read off equation \eqref{eqn:formula sectional curvature} that
 the Calabi metric gives a constant scalar curvature of value $\frac{1}{4V}$.
\end{rmk}

\begin{rmk}
When the function  $\chi^{-1}$  has convexity or concavity properties,
then the Jensen inequality provides an estimate
for the factor $\int_M \chi^{-1}(F) \frac{\omega^n}{n!}$.
\end{rmk}

\subsection{The Geodesic equation}
The aim of the present subsection is to write down the geodesic equation for each one of the
weighted metrics. We will solve the equation in a special case, and also we remark here
that the Calabi metric case was already considered. In all the remainder cases, it would be interesting to know
whether there are smooth solutions or not to the geodesic equation.
\smallskip
\begin{prop}\label{prop: geodesic equation}
The geodesic equation is
 \begin{align}\label{eqn: geodesic equation}
  D^{\chi}_t \frac{dF}{dt} = \frac{d^2 F}{dt^2}
  + \frac{\chi' (F)}{2\chi(F)}\left(\frac{dF}{dt} \right)^2
  -\frac{\chi^{-1} (F)}{\int_M \chi^{-1} (F)\frac{\omega^n}{n!}}
  \int_M \frac{\chi' (F)}{2\chi(F)}\left(\frac{dF}{dt} \right)^2 \frac{\omega^n}{n!}=0\; .
\end{align}
\end{prop}
\smallskip
\begin{rmk}
We would like to rewrite an equivalent form of \eqref{eqn: geodesic equation}
with respect to the K\"ahler potential $\phi$. So, we multiply by a factor $F^{-1}$,
and using that $F= \frac{\omega_\phi^n}{\omega^n}$, we have
\begin{align} \label{eqn: geodesic equation wrt kaehler potentials}
   &\frac{d}{dt}(\Delta_\phi \dot \phi)
   + (\Delta_\phi \dot \phi)^2
   + \frac{\omega_\phi^n}{\omega^n}
   \frac{\chi' (\frac{\omega_\phi^n}{\omega^n})}{2\chi(\frac{\omega_\phi^n}{\omega^n})}
   \left(\Delta_\phi \dot \phi \right)^2 \\
   &\phantom{+++}-\frac{\omega^n}{\omega_\phi^n}
   \frac{\chi^{-1} (\frac{\omega_\phi^n}{\omega^n})}{\int_M \chi^{-1} (\frac{\omega_\phi^n}{\omega^n})\frac{\omega^n}{n!}}
  \int_M
  \frac{\chi' (\frac{\omega_\phi^n}{\omega^n})}{2\chi(\frac{\omega_\phi^n}{\omega^n})}
  \left(\frac{\omega_\phi^n}{\omega^n} \Delta_\phi \dot \phi \right)^2 \frac{\omega^n}{n!}=0 \; .\nonumber
\end{align}
\end{rmk}
\smallskip
\begin{rmk}
The case of the Calabi metric can be retrieved in the following way;
when $\chi(x)= x^{-1}$, then the ratio $\frac{\chi'(F)}{\chi(F)} = -F^{-1}$,
so that the equation \eqref{eqn: geodesic equation} reads
\begin{align}\label{eqn:retrieving the calabi geodesic}
    \frac{d^2 F}{dt^2}
    - \frac{F^{-1}}{2}\left(\frac{dF}{dt} \right)^2
    +\frac{F}{2V}
    \int_M \frac{ 1}{F}\left(\frac{dF}{dt} \right)^2 \frac{\omega^n}{n!}=0\; .
\end{align}
 Then, using that
 $2F\frac{d^2 F}{dt^2} -\left(\frac{dF}{dt} \right)^2  =  4F^{\frac{3}{2}} \frac{d^2 \sqrt{ F}}{dt^2}$,
  we write the equation \eqref{eqn:retrieving the calabi geodesic} as
\begin{align*}
\frac{d^2 \sqrt{ F}}{dt^2}+ \frac{\sqrt{F}}{4V}
\int_M \frac{ 1}{F}\left(\frac{dF}{dt} \right)^2 \frac{\omega^n}{n!}=0\; .
    \end{align*}
   The key ingredient now is to recognize that the integral
   $\int_M \frac{ 1}{F}\left(\frac{dF}{dt} \right)^2 \frac{\omega^n}{n!}$
   is a constant of motion. Then the Calabi geodesic are solutions of the
   harmonic motion equation, and they make sense as long as $F$ is positive.
\end{rmk}

\subsection{Convexity of the $K$-energy along geodesics}
In this section we address the question on the convexity of
the Mabuchi $K$-energy along the geodesics
of some scalar product of the family \eqref{eqn:defined scalar product},
provided the existence of such geodesics. Thus, at the present level, the
discussion is formal.
By the way, we will compute an interesting example for which the
computation will be substantial.
\smallskip
\begin{prop}\label{prop:sign of second variation}
For a generic element of the family of scalar products \eqref{eqn:defined scalar product},
the second variation of the Mabuchi energy along its geodesic (provided its existence)
is given by the following formula
\begin{align*}
&\frac{d^2 \nu}{dt^2}
=\int_M (\Delta_\phi \phi ')^2 \frac{\omega_\phi^n}{n!} \\
&+ \int_M
\left[
 - \frac{\omega_\phi^n}{\omega^n}
   \frac{\chi' (\frac{\omega_\phi^n}{\omega^n})}{2\chi(\frac{\omega_\phi^n}{\omega^n})}
   \left(\Delta_\phi  \phi ' \right)^2
   +\frac{\omega^n}{\omega_\phi^n}
   \frac{\chi^{-1} (\frac{\omega_\phi^n}{\omega^n})}{\int_M \chi^{-1} (\frac{\omega_\phi^n}{\omega^n})\frac{\omega^n}{n!}}
  \int_M
  \frac{\chi' (\frac{\omega_\phi^n}{\omega^n})}{2\chi(\frac{\omega_\phi^n}{\omega^n})}
  \left(\frac{\omega_\phi^n}{\omega^n} \Delta_\phi \phi' \right)^2 \frac{\omega^n}{n!}
\right]\\
&\cdot
\left(
 \log\frac{\omega_\phi^n}{\omega_0^n}  - P
\right)
\frac{\omega_\phi^n}{n!}
+\int_M
([\p \bar \p P-\rho(\omega_0)]* \p (\phi ') , \, \bar \p \phi' )_{g_\phi}\frac{\omega_\phi^n}{n!}.
\end{align*}
Here above $\triangle_\phi P:=\omega_{\phi}* \rho(\omega_0)-\underline S$,
and $\rho(\omega_0) = \rho (0)$ is the
Ricci form corresponding to the K\"ahler form $\omega_\phi$.
\end{prop}
\begin{proof}
\begin{align*}
&\frac{d^2 \nu}{dt^2}
= -\frac{d}{dt}\int_M \phi ' (S(\phi)-\underline{S}) \frac{\omega_\phi^n}{n!}
=-\frac{d}{dt}\int_M \phi ' \omega_\phi * \rho(\phi)
\frac{\omega_\phi^n}{n!}\\
&=-\frac{d}{dt}\int_M \phi ' \omega_\phi * (\rho(\phi)-\rho(0))
\frac{\omega_\phi^n}{n!}
-\frac{d}{dt}\int_M \phi ' \omega_\phi * \rho(0) \frac{\omega_\phi^n}{n!}.
\end{align*}
The first term is
\begin{align*}
&=\frac{d}{dt}\int_M \phi '
\left(
\Delta_\phi \log\frac{\omega_\phi^n}{\omega_0^n}
\right)
\frac{\omega_\phi^n}{n!} = \frac{d}{dt}\int_M \Delta_\phi \phi '
\left(
 \log\frac{\omega_\phi^n}{\omega_0^n}
\right)
\frac{\omega_\phi^n}{n!}
\\
&= \int_M
\left(
\frac{d}{dt}\Delta_\phi \phi '
\right)
\left(
 \log\frac{\omega_\phi^n}{\omega_0^n}
\right)
\frac{\omega_\phi^n}{n!}
+\int_M (\Delta_\phi \phi ')^2
\frac{\omega_\phi^n}{n!}
+\int_M \Delta_\phi \phi '
\left(
 \log\frac{\omega_\phi^n}{\omega_0^n}
\right)
(\Delta_\phi \phi ' )
\frac{\omega_\phi^n}{n!}\\
&=\int_M (\Delta_\phi \phi ')^2 \frac{\omega_\phi^n}{n!}
+ \int_M
\left[
\frac{d}{dt}\left(\Delta_\phi \phi '\right)
+ \left( \Delta_\phi \phi '\right)^2
\right]
\left(
 \log\frac{\omega_\phi^n}{\omega_0^n}
\right)
\frac{\omega_\phi^n}{n!}
\; .\\
\end{align*}
The second term is
\begin{align*}
-\frac{d}{dt}\int_M \phi ' \omega_\phi * \rho(0) \frac{\omega_\phi^n}{n!}
=-\frac{d}{dt}\int_M \phi '[ \omega_\phi * \rho(0)-\underline S] \frac{\omega_\phi^n}{n!}
\end{align*}
\begin{align*}
=-\frac{d}{dt}\int_M \phi ' \triangle_\phi P\frac{\omega_\phi^n}{n!}
=-\frac{d}{dt}\int_M \left( \triangle_\phi \phi ' \right) P\frac{\omega_\phi^n}{n!}
\end{align*}
\begin{align*}
=-\int_M \left[ \frac{d}{dt}\left( \triangle_\phi \phi ' \right)
+ \left( \triangle_\phi \phi ' \right)^2 \right] P\frac{\omega_\phi^n}{n!}
-\int_M \phi ' \triangle_\phi P' \frac{\omega_\phi^n}{n!}
\end{align*}
\begin{align*}
=-\int_M \left[ \frac{d}{dt}\left( \triangle_\phi \phi ' \right)
+ \left( \triangle_\phi \phi ' \right)^2 \right] P\frac{\omega_\phi^n}{n!}
+\int_M  ([\p \bar \p P- \rho(\omega_0)]*\p \phi' , \, \bar \p \phi ')_{g_\phi} \frac{\omega_\phi^n}{n!}.
\end{align*}
So, we found that
\begin{align*}
 &\frac{d^2 \nu}{dt^2}
=\int_M (\Delta_\phi \phi ')^2 \frac{\omega_\phi^n}{n!} \\
&+ \int_M
\left[
\frac{d}{dt}(\Delta_\phi  \phi') + (\Delta_\phi  \phi')^2
 \right]
\left(
 \log\frac{\omega_\phi^n}{\omega_0^n}  - P
\right)
\frac{\omega_\phi^n}{n!}\\
&+\int_M  ([\p \bar \p P- \rho(\omega_0)]*\p \phi' , \, \bar \p \phi ')_{g_\phi} \frac{\omega_\phi^n}{n!}\, ,
\end{align*}
and plugging in \eqref{eqn: geodesic equation wrt kaehler potentials}, we complete
the proof of the proposition.
\end{proof}

We want to give an example of a scalar product of the family
\eqref{eqn:defined scalar product} such that the Mabuchi $K$-energy is
convex along its geodesic, at least in some special cases.
\subsection{Entropy function}
Recall the explicit form
of $K$-energy in Chen \cite{MR1772078} and Tian \cite{MR1787650},
\begin{align*}
\nu_\om(\vphi)
&=\frac{1}{V}\int_M\log\frac{\om^n_\vphi}{\om^n}\om_\vphi^{n}
+\frac{\as}{V}\sum_{i=0}^n\frac{n!}{(i+1)!(n-i)!}
\int_{M}\vphi\om^{n-i}\wedge(\p\bar\p\vphi)^i\\
&-\frac{1}{V}\sum_{i=0}^{n-1}\frac{n!}{(i+1)!(n-i-1)!}\int_{M}\vphi
Ric\wedge\om^{n-1-i}\wedge(\p\bar\p\vphi)^{i}.
\end{align*}
Here, we consider the leading term of the $K$-energy.
\begin{defn}
We introduce the entropy function in the the space of volume conformal factors $\mathcal C$ (see \eqref{conformal})
\begin{align}\label{entropy}
\mathcal E(F)= \int_M F \log (F) \frac{\omega^n}{n!} .
\end{align}
In which, $F$ is the volume conformal factor $\frac{\om^n_\vphi}{\om^n}$.
\end{defn}
We compute that along a differential curve $F(t)$,
$$\frac{d}{dt} \mathcal E(F(t))=\int_M F' \log (F) \frac{\omega^n}{n!}+V.$$
Since the volume of the K\"ahler metrics in a fixed K\"ahler class is a topological constant, we obtain the following second order derivative
\begin{align}\label{eqn: expression for intermediate example}
\frac{d^2}{dt^2} \mathcal E(F(t))=\frac{d}{dt} \int_M F' \log (F) \frac{\omega^n}{n!} .
\end{align}
Now assuming that $F(t)$ is a geodesic and using \eqref{eqn: geodesic equation}, we get
\begin{align*}
& \frac{d}{dt} \int_M F' \log (F) \frac{\omega^n}{n!}
=\int_M F '' \log (F)\frac{\omega^n}{n!}+ \int_M \frac{(F')^2}{F} \frac{\omega^n}{n!}\\
&=- \frac{1}{2}\int_M (F ')^2 \rho '(F) \log (F)\frac{\omega^n}{n!}\\
&+ \frac{1}{2}
\frac{\int_M \chi^{-1}(F)\log(F)\frac{\omega^n}{n!}}{\int_M \chi^{-1}(F)\frac{\omega^n}{n!}}
\int_M (F ')^2 \rho ' (F) \frac{\omega^n}{n!}
+ \int_M \frac{(F')^2}{F} \frac{\omega^n}{n!}\;,
\end{align*}
where $\rho=\log\chi$.
For now, we are able to find and example of such a $\rho$ for which the sign of the
expression \eqref{eqn: expression for intermediate example} is non negative. It suffices to choose $$\rho(x)=\log\log (x+1).$$ Then for any $x>0$, $\rho'=\frac{1}{(x+1) \log (x+1)}>0$ and $\chi=\log (x+1)>0$. So
 \begin{align}\label{osservazione per ocnvessita}
  \frac{1}{x} - \frac{1}{2}\rho '(x) \log x=
    \frac{1}{x} -\frac{1}{2} \frac{\log x}{(x+1) \log (x+1)}  > 0 \;.
 \end{align}
Therefore, $\mathcal E(F)$ is convex along the geodesic with respect to the weighted metric with weight $\chi=e^\rho$.
\section{The example of the constant weight metric}
In this section we consider back the
features listed in  Section \ref{section the family of weighted metrics} for the
special case given by the metric which has weight constantly one;
namely, the metric gotten by plugging $\chi = 1$
into \eqref{eqn:defined scalar product}, that is
\begin{align}\label{eqn: defn flat metric}
<G_1 ,\, G_2 >_{F;\, 1} : = \int_M G_1 G_2  \frac{\omega^n}{n!} \;.
\end{align}

\begin{rmk}
At the level of K\"ahler metrics,
 the scalar product is given by
 \begin{align*}
  \int_M (\Delta_{\phi} \psi )^2
\frac{\omega_{\phi}^n}{\omega^n}
\frac{\omega_\phi^n}{n!}\; .
 \end{align*}
 \end{rmk}

\subsection{The Levi-Civita connection for the constant weight metric}

The existence and uniqueness of the Levi-Civita connection
for the family of weighted metrics \eqref{eqn:defined scalar product}
has already been proved and discussed. So, as an application of Proposition
\ref{prop:levicivita existence and uniqueness}, we have

\begin{cor}
 The formula for the Levi-Civita connection for the constant weight metric
 is given by
\begin{align}\label{eqn: covariant derivative for the dlat metric}
 D_t^1 G = \frac{d}{dt}G \; .
\end{align}
\end{cor}

\subsection{The Sectional Curvature for the constant weight metric}
Concerning the sectional curvature, we can read off Theorem
\ref{thm: formula of sectional curvature} that it is constantly zero.
Moreover, Lemma \ref{lem: formula for the Riemann curvature tensor}
tells us that also the Riemann curvature tensor is constantly zero.
Let us state this observation precisely.
\begin{cor}\label{cor: sectional curvature of constante weight metric}
 The constant weight metric \eqref{eqn: defn flat metric} has sectional curvature zero.
\end{cor}

\subsection{Geodesics for the constant weight metric}
We presently don't know whether most of the weighted metrics do admit geodesics.
We know for sure that it is the case for the Calabi metric, and now we want to show
that a geometry as rich and  explicit as that one of the Calabi metric exists also for
the constant weight metric.
We begin with applying Proposition \ref{prop: geodesic equation}
to the constant weight metric.
\begin{cor}
 The geodesic equation for the flat scalar product \eqref{eqn: defn flat metric}
 is  $F_{tt} = 0$.
 In terms of K\"ahler potentials, the geodesic equation is
 \begin{align}\label{eqn: geodesic eqn flat metric at kaehler potentials level}
  \frac{d}{dt}\left( \Delta_\phi \phi '\right) + \left(\Delta_\phi \phi ' \right)^2 = 0
  \; .
 \end{align}
\end{cor}
The geodesic equation for the constant weight metric is thus solvable,
both when we consider the Cauchy and the Dirichlet problems.
Moreover, we do have smooth  explicit geodesics.
We summarize those properties in the next result.
\begin{thm}\label{thm: geodesics for the constant weight metric}
 For any given $F_0 , \, F_1 \in \mathcal{C}$, there is a unique geodesic
 arc for the constant weight metric \eqref{eqn: defn flat metric} connecting
 $F_0$ and $F_1$. Such geodesic arc is given by
 \begin{align}\label{eqn: geodesic arc for the constant weight metric}
  F(t)= t F_1 + (1-t) F_0 \in \mathcal{C}, \qquad t\in [0,\, 1]  \; .
 \end{align}
For any $F\in \mathcal{C}$ and any $G\in T_F\mathcal{C}$, the geodesic line starting at $F$
with speed $G$ is given by
\begin{align}\label{eqn: geodesic line for the constant weight metric}
 F(t)=  F + tG,
\end{align}
where $t$ ranges in a suitable open neighborhood of $0$ such that $F(t)>0$.
\end{thm}
We say $G$ is the non-negative direction if $G\geq 0$ and negative if $G< 0$.
Note that the geodesic line may be degenerate along the negative directions.

\subsection{Geometric properties of the constant weight metric}
In this subsection, we always assume that when $C_1(M) \leq 0$.
A surprising fact  that we will discuss later is a
 convexity property of the $K$-energy along the geodesics
 of the space $(\mathcal{C}, < \cdot ,\, \cdot >_{F;\, 1})$ in the cases when $C_1(M) \leq 0$.
As a result, we find back the uniqueness of K\"ahler-Einstein metrics,
in any K\"ahler class when the first Chern class vanishes, and
in $-2\pi C_1 (M)$ when the first Chern class is negative. We have to emphasize here
that the uniqueness of K\"ahler-Einstein metrics when
$C_1 (M) \leq 0$ was proved by Calabi by integration by part \cite{MR0085583}.


\begin{prop}\label{prop:sign of second variation}
Along the geodesics of the constant weight metric,
the second variation of the Mabuchi energy is given by the following formula
\begin{align*}
\frac{d^2 \nu}{dt^2}
&=\int_M (\Delta_\phi \phi ')^2 \frac{\omega_\phi^n}{n!}
+\int_M  ([\p \bar \p P - \rho(\omega_0)]* \p \phi ' , \bar \p \phi ')_{g_{\phi}} \frac{\omega_\phi^n}{n!}\; .
\end{align*}
Here above $\triangle_\phi P:=\omega_\phi * \rho(0)-\underline S$.
\end{prop}
\begin{proof}
The argument goes similarly to that one of Proposition \ref{prop:sign of second variation}.
\end{proof}
The Ricci potential $h_\om$ is defined as
\begin{align*}
\rho(\om)-\l\om=\frac{\sqrt{-1}}{2}\p\bar\p
h_\om.
\end{align*}
Under the normalized conditions $\int_Me^{h_\om}\om^n=V$.
\begin{lem}\label{lem belongs to the first Chern class}
When $\om_\phi$ belongs to the first Chern class, then we have
\begin{align*}
P=-\l\phi+h_\om
\end{align*}
\end{lem}
\begin{proof}
Since
\begin{align*}
S-\l n
&=g_\phi^{i\bar j}(\rho_{i\bar j}(\om_\phi)-\l
g_{\phi i\bar j})\\
&=g_\phi^{i\bar j}(\rho_{i\bar j}(\om_\phi)-\rho_{i\bar j}(\om)+\rho_{i\bar
j}(\om)-\l g_{i\bar j}+\l g_{i\bar j}-\l
g_{\phi i\bar
j})\\
&=\tri_\phi(-\log\frac{\om^n_\phi}{\om^n}-\l\phi+h_\om).
\end{align*}
Therefore we have
\begin{align}
P=-\l\phi+h_\om.
\end{align}
\end{proof}
After applying Lemma \ref{lem belongs to the first Chern class}
into Proposition \ref{prop:sign of second variation}, we obtain
\begin{align*}
 &\frac{d^2 \nu}{dt^2}
=
\int_M (\Delta_\phi \phi ')^2 \frac{\omega_\phi^n}{n!}
 - \lambda \int_M |\nabla \phi' |_{g_\phi}^2 \frac{\omega_\phi^n}{n!} \; .
\end{align*}
Suppose that $C_1 (M) \leq 0$, then $\frac{d^2 \nu}{dt^2}\geq 0$. We obtain that
\begin{prop}
When $C_1 (M) \leq 0$, the $K$-energy is convex along geodesics of the constant weight metric.
\end{prop}
A direct corollary of the convexity is $$\frac{d}{dt}\nu(\om_\vphi)\vert_{t=1}\geq\frac{d}{dt}\nu(\om_\vphi)\vert_{t=0}.$$
We assume that $\omega_0$ and $\omega_1 = \omega_\phi$ are two K\"ahler-Einstein metrics in the same K\"ahler class.
Then, write $\omega_1 = \omega_0 + i\partial \overline{\partial}\psi$. Since $\int_M \phi ' \omega_\phi^n =0$, the above equation implies that $\phi ' = 0$, and thus the
geodesic is trivial and hence $\omega_0 = \omega_1$.
Hence, we arrive at the alternative proof of Calabi's uniqueness theorem. The energy and
the distance from $\om$ to $\om_\vphi$ w.r.t. the constant weight metric are
\begin{align*}
E(\om,\om_\vphi)&=\int_M(\tri_\vphi\vphi')^2(\frac{\om^n_\vphi}{\om^n})^2\om^n,\\
d(\om,\om_\vphi)&=\int_0^1\sqrt{E(\om,\om_\vphi)}dt.
\end{align*}
\begin{thm}\label{unike}
When $C_1 (M) \leq 0$, the K\"ahler-Einstein metric is unique.
\end{thm}
\begin{defn}
We introduce a new functional which is the square norm of the scalar potential $f$ under the normalization condition $\int_M e^f\frac{\omega_\vphi^n}{n!}=1$,
\begin{align*}
G(\om_\vphi)=\int_M f^2\frac{\omega^n}{n!}.
\end{align*}
\end{defn}
In particular, in the canonical class, $f$ is the Ricci potential $f=-\log\frac{\om^n_\vphi}{\om^n}+h_\om-\l\vphi$. We list the properties of $G$ in the canonical class.
\begin{enumerate}
  \item The Euler-Lagrange equation is $(\tri_\vphi+\l)(f\frac{\om^n}{\om^n_\vphi}).$ When $\l\leq0$, $f=0$. So the critical points are the K\"ahler-Einstein metrics.
  \item At a K\"ahler-Einstein metric, the second variation  $$G''=\frac{1}{2}\int_M[(\tri_\vphi+\l)\vphi']^2\om^n\geq 0.$$ The equality holds if and only if $\vphi'$ vanishes.
  \item The gradient flow is of the form $$f'=-f .$$
\end{enumerate}
Now we emphasis on the gradient flow of the $G$-functional: $f'=-f $ with smooth initial K\"ahler potential $\vphi(0)=\vphi_0$. Solving this ODE, we have $f=e^{-t}f_0$ for $f_0=-\log\frac{\om^n_{\vphi_0}}{\om^n}+h_\om-\l\vphi_0$. Now the new equation is a Monge-Amp\`{e}re equation with the right hand side depends on $t$. I.e.
\begin{align*}
\frac{\om^n_\vphi}{\om^n}=e^{h_\om-\l\vphi-e^{-t}f(0)}\doteq e^F.
\end{align*}
The advantage of this flow is that the term $e^{-t}$ is always bounded. The apriori estimats achieve from the following steps.
When $C_1<0$, $C^0$-estimate follows from the maximum principle. At the maximum point $p$, we have $\vphi=-h_\om+\frac{\om^n_\vphi}{\om^n}+e^{-t}f(0)\leq -h_\om+e^{-t}f(0)$ which is bounded from above when $t$ is large. The lower bound follows in the same way. When $C_1=0$, $C^0$-estimate follows from Yau's estimate \cite{Yau}.

The second order estimate also follows from Yau's paper \cite{Yau}. Choosing $C$ such that $C+\inf_{i\neq{k}}R_{i\bar{i}k\bar{k}})=1$,
at the
point $p$ where $e^{-C\vphi}(n+\tri\vphi)$ achieve the
maximal value, we have
\begin{align*}
&0\geq\tri'(e^{-C\varphi}(n+\tri\vphi))(p)\\
&\geq
e^{-C\vphi}\{-Cn(n+\tri\vphi)+(C+\inf_{i\neq{k}}R_{i\bar{i}k\bar{k}})(n+\tri\vphi)^{\frac{n}{n-1}}
e^{\frac{-F}{n-1}}+\tri{F}-n^2(\inf_{i\neq{k}}R_{i\bar{i}k\bar{k}})\}(p).
\end{align*}
Here $\tri F=\tri( h_\om-\l\vphi-e^{-t}f(0))$.
So the second order estimate follows by combining the $C^0$-estimate.

The $C^{2,\a}$-estimate follows from Evans-Krylov estimate, for the concrete constant dependence is specified in Proposition 4.6 in Calamai-Zheng \cite{CalamaiZheng}. So we revisit the existence theorem of K\"ahler-Einstein metric when $C_1 (M) \leq 0$ which was proved by the continuity method of Yau \cite{Yau} and by the K\"ahler-Ricci flow by Cao \cite{MR0799272}.
\begin{thm}\label{exke}
When $C_1 (M) \leq 0$, the gradient flow of the $G$-functional has long time existence and converges to a K\"ahler-Einstein metric.
\end{thm}
In the following we discuss the general properties of the distance function, $K$-energy and the $G$-functional in any K\"ahler class.
\begin{prop}The following geometric inequalities hold.
\begin{enumerate}
  \item The energy inequality: \begin{align}\label{varine}
\nu(\om_\vphi)-\nu(\om)\leq d(\om_\vphi,\om)\sqrt{G(\om_\vphi)}\; .
\end{align}
  \item The geodesic distance has lower bound: \begin{align*}
d(\vphi_0,\vphi_1)\geq \max \left\{
-V^{-\frac{1}{2}}\int_{\tri_{\vphi_0}\vphi'_0<0} \tri_{\vphi_0}\vphi'_0 \om^n_{\vphi_0},\;
V^{-\frac{1}{2}}\int_{\tri_{\vphi_1}\vphi'_1>0} \tri_{\vphi_1}\vphi'_1 \om^n_{\vphi_1} \; .
\right\}\end{align*}
  \item Triangle inequality:
  $d(\vphi_0,\vphi_1)-d(\vphi_1,\vphi_2)\leq d(\vphi_0,\vphi_2)\leq d(\vphi_0,\vphi_1)+d(\vphi_1,\vphi_2)\; .$
\end{enumerate}
\end{prop}
\begin{proof}
Proof of (1). We first connect $\om_\vphi$ and $\om$ by the smooth geodesic. Since
\begin{align*}
\frac{d}{dt}\nu(\om_\vphi)=-\int_M\tri_\vphi\vphi'\frac{\om^n_\vphi}{\om^n}
f\om^n \; ,
\end{align*}
then the proposition follows from the Cauchy-Schwarz inequality
\begin{align*}
\nu(\om_\vphi)-\nu(\om)
&=-\int_0^1\int_M\tri_\vphi\vphi'\frac{\om^n_\vphi}{\om^n}f\om^ndt\\
&\leq d(\om_\vphi,\om)\sqrt{G(\om_\vphi)} \; .
\end{align*}

Proof of (2).
From the geodesic equation
$\frac{d}{dt}\left( \Delta_\vphi \vphi '\right) =- \left(\Delta_\vphi \vphi ' \right)^2 \leq 0$,
we have $$\Delta_{\vphi_1} \vphi '_1\leq \Delta_{\vphi(t)} \vphi '(t)\leq \Delta_{\vphi_0} \vphi '_0 \; .$$
Then the energy $E$ at $t=1$ is bounded by below by the Schwarz inequality,
\begin{align*}
\sqrt{E(1)}\geq V^{-\frac{1}{2}}\int_{\tri_{\vphi_1}\vphi'_1>0} \tri_{\vphi_1}\vphi'_1 \om^n_{\vphi_1} \;.
\end{align*}
Similarly, we have at $t=0$,
\begin{align*}
\sqrt{E(0)}\geq -V^{-\frac{1}{2}}\int_{\tri_{\vphi_0}\vphi'_0<0} \tri_{\vphi_0}\vphi'_0 \om^n_{\vphi_0}\; .
\end{align*}
Since $\p_t E=0$ along the geodesic, we have $\sqrt{E(t)}\geq \max \{\sqrt{E(0)},\sqrt{E(1)}\}.$
Thus the proposition follows since $d=\int_0^1\sqrt{E(t)}dt$.

Proof of (3). The triangle inequality follows from the flat curvature property directly.
\end{proof}
Choosing the path $\vphi(t)=t\vphi$, we have
\begin{cor}
If there is a positive constat $C$ such that $\frac{1}{C}\leq\tri\vphi\leq C$ and $G(\om_\vphi)\leq C$, then the $K$-energy of $\om_\vphi$ is bounded from above by a constant which depends on $C_1$.
\end{cor}
In particular, when $\l$ is zero, the upper bound of $\nu(\om_\vphi)$ only depends on $\frac{1}{C}\leq\tri\vphi\leq C$. When $\l$ is nonzero, it depends on both $\frac{1}{C}\leq\tri\vphi\leq C$ and $\int_M\vphi^2\om^n$.
\begin{cor}
The $K$-energy has lower bound when $M$ admits a constant scalar curvature K\"ahler metric.
\end{cor}
\begin{proof}
In \eqref{varine} choose $\om_\vphi$ to be a constant scalar curvature K\"ahler metric
and $\om$ to be any K\"ahler metric.
Then this properties follows from $G(\om_\vphi)=\nu(\om_\vphi)=0$.
\end{proof}

\subsection{The metric space structure; two non-isometric flat metrics}
 We are going to see that the space $(\mathcal{C}, <\cdot ,\, \cdot >_{F;\, 1})$
 plays as a counterpart of the space
 $(\mathcal{H}, \int_M \psi^2 \frac{\omega^n}{n!} )$.
 This is to say, both the spaces are flat,
 but a pleasant feature is that they are not isometric,
 so that we can interpret $(\mathcal{C}, <\cdot ,\,\cdot >_{F;\, 1})$
 (via the isometric model given by the Monge-Amp\`ere map)
 as another flat structure on the space of K\"ahler metrics
 $\mathcal{H}$.\\
 Now that we observed that the scalar product \eqref{eqn: defn flat metric}
is flat, it is natural to address the following question.
Consider, on the space of K\"ahler metrics $\mathcal{H}$, the
trivial scalar product given by
\begin{align}\label{eqn: trivial flat scalar product}
 <\psi_1 , \psi_2 >_\phi := \int_M \psi_1 \psi_2 \frac{\omega^n}{n!}\; .
\end{align}
The independence of the right hand side on subscript $\phi$
appearing on the left hand side turns the above scalar product
into a flat one.
Moreover, we have just seen that this other scalar product
on $\mathcal{H}$ is flat as well
\begin{align*}
 (\psi_1 , \psi_2 )_\phi =   \int_M (\Delta_{\phi} \psi_1 ) \cdot
 (\Delta_{\phi} \psi_2 )
\frac{\omega_{\phi}^n}{\omega^n}
\frac{\omega_\phi^n}{n!}\; .
\end{align*}
The question is whether the two scalar products are isometric or not.
To answer the question, we want to compare the distance functions induced by the two
flat structures.
In the case of the scalar product \eqref{eqn: trivial flat scalar product},
for any two (normalized) K\"ahler potentials $\phi_0$ and $\phi_1$, there
is a unique  geodesic connecting them, which is given by
\begin{align*}
 \phi(t)= (1-t)\phi_0 + t \phi_1 \; ,
\end{align*}
with $t\in [0,1]$. The fact that this geodesic is contained in $\mathcal{H}$
follows from the Euclidean-convexity of that space.
The distance function induced by that metric is the same as the length
of a geodesic arc. Namely, the distance between $\phi_0$ and $\phi_1$
for this structure, which we label as $d_{\trivial}(\phi_0 , \phi_1)$ is
\begin{align}\label{eqn: distance function for trivial structure}
 d_{\trivial}(\phi_0 , \, \phi_1) = \int_M (\phi_1 - \phi_0)^2 \frac{\omega^n}{n!} \; .
\end{align}
In order to get the distance function for the scalar product \eqref{eqn: defn flat metric},
we look at the expression \eqref{eqn: geodesic arc for the constant weight metric}
in Theorem \ref{thm: geodesics for the constant weight metric}; then, we recall that
under the Monge-Amp\`ere diffeomorphism, there exist two K\"ahler metrics
$\phi_0$ and $\phi_1$ such that $F_k = \frac{\omega_{\phi_k}^n}{\omega^n}$ with $k=0,1$.
Thus, the  expression \eqref{eqn: geodesic arc for the constant weight metric}
of the geodesic arc can be rewritten, in the space $\mathcal{H}$, as
\begin{align*}
 \frac{\omega_{\phi (t)}^n}{\omega^n} =
 (1-t )\frac{\omega_{\phi_0}^n}{\omega^n}
 + t \frac{\omega_{\phi_1}^n}{\omega^n}\; .
\end{align*}
A corollary of Theorem \ref{thm: geodesics for the constant weight metric},
together with the argument of \cite[Lemma 3]{Calamai}, which proves that geodesics are length minimizing,
give the following fact
\begin{prop}
 The constant weight metric endows the space $\mathcal{C}$ with a metric structure.
 The length of a geodesic arc gives the distance between its endpoints.
\end{prop}
We label the distance in this second case as $\dist(\phi_0 ,\,  \phi_1)$,
and we get the expression
\begin{align*}
 \dist(\phi_0 ,\,  \phi_1)
 =
 \int_M
 \left(
 \frac{\omega_{\phi_1}^n - \omega_{\phi_0}^n}{\omega^n}
 \right)^2
 \frac{\omega^n}{n!} \; .
\end{align*}

To summarize the above discussion, we state the following result
\begin{prop}
 The space of K\"ahler metrics $\mathcal{H}$ endowed with the
 Riemannian structure  \eqref{eqn: defn flat metric}
 is not isometric to the same space endowed with the
 Riemann structure \eqref{eqn: trivial flat scalar product}.
\end{prop}

We are going to discuss the metric completion of the constant weight metric.
It is apparent that the constant weight metric is a $L^2$ metric with respect to the
measure induced by the fixed K\"ahler metric $\omega$. Thus, it is immediate to get the following formula
 \begin{prop}
  The metric completion space for the constant weight metric is given by
  \begin{align*}
   \overline{\mathcal{C}}^{L^2(M, \omega)}
   =
   \left\{
   F\in L^2(M, \omega) \; | \; F\geq 0\,  \omega^n - a.e.
   \right\} \; .
  \end{align*}
 \end{prop}

\bigskip

\bigskip

\end{document}